\newcommand{\PSL}{PSL(2,\C)}
\newcommand{\R}{\mathbb{R}}
\newcommand{\Z}{\mathbb{Z}}
\newcommand{\N}{\mathbb{N}}
\newcommand{\C}{\mathbb{C}}
\newcommand{\CC}{\ensuremath{{\widehat{\mathbb C}}}}
\newcommand{\CP}{\ensuremath{{\mathbb{CP}}}}
\newcommand{\racs}[1]{\Omega^1(-1^{#1})}
\newcommand{\RI}[1]{\mathcal{RI}\Omega^1(-#1)}
\newcommand{\Sim}[1]{ \mathcal{S}\left( #1 \right)}
\renewcommand{\Re}[1]{\mathfrak{R}e\left(#1\right)}
\renewcommand{\Im}[1]{\mathfrak{I}m\left(#1\right)}
\newcommand{\rp}[1]{\left\langle #1 \right\rangle}
\newtheorem{definition}{\textbf{Definition}}[section]
\newtheorem{lemma}[definition]{\textbf{Lemma}}
\newtheorem{proposition}[definition]{\textbf{Proposition}}
\newtheorem{corollary}[definition]{\textbf{Corollary}}
\newtheorem{theorem}[definition]{\textbf{Theorem}}
\newtheorem{remark}[definition]{\textbf{Remark}}
\newtheorem{example}[definition]{\textbf{Example}}
\newtheoremstyle{TheoremNum}
        {\topsep}{\topsep}              
        {\itshape}                      
        {}                              
        {\bfseries}                     
        {.}                             
        { }                             
        {\thmname{#1}\thmnote{ \bfseries #3}}
    \theoremstyle{TheoremNum}
    \newtheorem{rtheorem}{Theorem}
    \newtheorem{rproposition}{Proposition}
    \newtheorem{rcorollary}{Corollary}
\begin{document}

\title{Classification of rational 1--forms on the Riemann sphere up to $PSL(2, \mathbb{C})$}


\author{Julio C. \textsc{Maga\~na--C\'aceres} \\
\small{Centro de Ciencias Matem\'aticas UNAM,} \\
\small{Campus Morelia C.P. 58190,}  \\
\small{Instituto de F\'isica y Matem\'aticas UMSNH,}\\
\small{C.P. 58040 Morelia, Michoac\'an M\'exico} \\
\small{julio@matmor.unam.mx}   \\
\small{ORCID: 0000--0002--3272--7541}
}

\maketitle

\begin{abstract}
We study the family $\racs{s}$ of rational 1--forms on the Riemann sphere, 
having exactly $-s \leq -2$ simple poles. 
Three equivalent $(2s-1)$--dimensional complex atlases on $\racs{s}$, 
using coefficients, 
zeros--poles and residues--poles of the 1--forms, 
are recognized. 
A rational 1--form are called isochronous when all their residues are purely imaginary. 
We prove that the subfamily $\RI{s}$ of isochronous 1--forms is a $(3s-1)$--dimensional real analytic submanifold in the complex manifold $\racs{s}$. 
The complex Lie group $\PSL$ acts holomorphically on $\racs{s}$. 
For $s \geq 3$, 
the $\PSL$--action is proper on $\racs{s}$ and $\RI{s}$. 
Therefore, 
the quotients $\racs{s}/\PSL$ and $\RI{s}/\PSL$ admit a stratification by orbit types. 
Realizations for $\racs{s}/\PSL$ and $\RI{s}/\PSL$ are given, 
using an explicit set of $\PSL$--invariant functions.

\noindent \textbf{Classification:}{ Primary 32M05; Secondary 30F30, 58D19.}\\
\noindent \textbf{Keywords:}{ Rational 1--forms, isochronous centers, proper $\PSL$--action, principal $\PSL$--bundle, stratified space by orbit types.}
\end{abstract}


\section{Introduction}

A compact Riemann surface $M_g$ of genus $g\geq 0$ is determined by its space of holomorphic 1--forms. 
For $g \geq 1$, the Jacobian variety $J(M_g)$ is defined by using the vector space of holomorphic 1--forms. 
Very roughly speaking, Torelli's theorem says that we can recover $M_g$ from $J(M_g)$; 
see \cite[p.~359]{Griffith}. 
Moreover, for $g \geq 2$, 
a classical result of Hurwitz asserts that the automorphism group $Aut(M_g)$ is finite; 
see \cite[Ch. V]{farkas}. 
Looking at the moduli space of compact Riemann surfaces $\mathcal{M}_{g,0}$, 
generically $Aut(M_g)$ is trivial. 
For $g=0$, three special features appear. 
Obviously there exists only one complex structure, set the Riemann sphere $\CC$. 
Secondly, 
any holomorphic 1--form over $\CC$ is identically zero. 
Finally, 
the automorphism group $\PSL$ of $\CC$ is the biggest between the automorphism groups of any Riemann surface. \vspace{0cm}
\begin{flushleft}
\textit{A natural problem is the classification of rational 1--forms on $\CC$ with $-s \leq -2$ poles up to the automorphism group $\PSL$.}
\end{flushleft}\vspace{0cm}
The family of rational 1--forms on $\CC$ is an infinite dimensional vector space. 
A natural idea is to consider the stratification by the multiplicities of zeros and poles. 
Let $\racs{s}$ be the family of rational 1--forms having exactly $-s \leq -2$ simple poles.
Our techniques naturally allows us to study rational 1--forms with zeros of any multiplicity and simple poles. 
We recognized three equivalent $(2s-1)$--dimensional complex atlases on $\racs{s}$, 
looking at different expressions of the rational 1--forms coefficients $\Omega^1_{coef}(-1^s)$, zeros--poles $\Omega^1_{zp}(-1^s)$ and residues--poles $\Omega^1_{rp}(-1^s)$. 
Our main result is the following theorem:
\begin{rtheorem}[\ref{teorema-equivalencia-parametros}]
  The complex manifolds $\Omega_{coef}^{1}(-1^s)$, $\Omega_{zp}^{1}(-1^s)$ and $\Omega_{rp}^{1}(-1^s)$ are biholomorphic.
\end{rtheorem}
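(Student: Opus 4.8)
The plan is to treat $\racs{s}$ as a single set of $1$--forms and to regard the three descriptions as three systems of coordinates on it; the theorem then reduces to checking that the canonical transition maps between these coordinate systems are holomorphic together with their inverses, i.e. that the identity map on the underlying set of forms is biholomorphic with respect to the three structures. Since ``biholomorphic'' is an equivalence relation, it suffices to establish two comparisons, say $\Omega^1_{coef}(-1^s)\leftrightarrow\Omega^1_{rp}(-1^s)$ and $\Omega^1_{coef}(-1^s)\leftrightarrow\Omega^1_{zp}(-1^s)$, working on the chart where the $s$ poles are finite and distinct. The remaining charts, where a pole or a zero sits at $\infty$, are reduced to this one by a Möbius change of coordinate $z\mapsto\frac{az+b}{cz+d}$, under which all three descriptions transform holomorphically.

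First I would dispatch the residues--poles comparison, which is purely algebraic. Writing a form in the coefficient chart as $\omega=\frac{P(z)}{Q(z)}\,dz$ with $Q(z)=\prod_{j=1}^{s}(z-p_j)$ and $\deg P=s-2$, the residue at $p_j$ is $r_j=P(p_j)/Q'(p_j)$, where $Q'(p_j)=\prod_{k\neq j}(p_j-p_k)\neq 0$ precisely because the poles are distinct. Thus $(a_k,p_j)\mapsto(r_j,p_j)$ is rational with nonvanishing denominator, hence holomorphic. Conversely, partial fractions give $\omega=\sum_{j=1}^{s}\frac{r_j}{z-p_j}\,dz$, so $P(z)=\sum_{j}r_j\prod_{k\neq j}(z-p_k)$ and the $a_k$ are polynomials in $(r_j,p_j)$; here the residue theorem $\sum_j r_j=0$ is exactly what forces the coefficient of $z^{s-1}$ to vanish, i.e. $\deg P=s-2$. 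These two assignments are mutually inverse, so the comparison is a biholomorphism.

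The zeros--poles comparison carries the only real difficulty. In one direction it is again Vieta's formulas: if $P(z)=\lambda\prod_{i=1}^{s-2}(z-c_i)$, then the $a_k$ are $\lambda$ times elementary symmetric functions of the zeros, manifestly holomorphic in $(\lambda,c_i,p_j)$. The delicate direction is recovering the zeros from the coefficients, since the roots of $P$ are only defined as an unordered multiset and do not depend holomorphically on the $a_k$ as individually labeled functions across the discriminant locus where zeros collide. \textbf{This is the step I expect to be the main obstacle.} I would resolve it by recording the zero data not as an ordered tuple but as a point of the symmetric product $\mathrm{Sym}^{s-2}(\C)$, and invoking the classical biholomorphism $\mathrm{Sym}^{s-2}(\C)\cong\C^{s-2}$ furnished by the elementary symmetric polynomials: this identifies the unordered zeros with the normalized coefficients $(a_k/\lambda)$ and is holomorphic with holomorphic inverse even where roots coincide. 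Away from the discriminant one may alternatively label the simple roots locally and apply the inverse function theorem, the relevant Jacobian being a Vandermonde determinant that is nonzero exactly when the zeros are distinct; the two viewpoints agree and patch to a global biholomorphism. Combining the two comparisons yields the asserted biholomorphisms among all three manifolds.
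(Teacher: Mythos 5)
Your proposal is correct and follows essentially the same route as the paper: both reduce the theorem to two comparisons through $\Omega^1_{coef}(-1^s)$, both use partial fractions and the residue formula for the residues--poles comparison, and both rest on the Vi\`ete map (equivalently the classical biholomorphism $\mathrm{Sym}^{n}(\C)\cong\C^{n}$, which the paper builds into the very definition of the complex structure on $M_s$) for the zeros--poles comparison. The differences are only in verification style: the paper checks invertibility of its map $C_s$ via a Jacobian computation and treats poles at infinity by explicit case formulas, whereas you exhibit two-sided holomorphic inverses and reduce to the all-finite chart by a M\"obius transformation.
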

For the proof, 
we construct explicitly the biholomorphisms $\Omega_{rp}^1(1^s) \longrightarrow \Omega_{coef}^1(1^s)$ and $\Omega_{zp}^1(1^s) \longrightarrow \Omega_{coef}^1(1^s)$ using the \textit{Vi\`ete} map defined in \eqref{viete-map}.  

The group $\PSL$ acts naturally on $\racs{s}$ by coordinate changes. 
As a result, 
this $\PSL$--action is proper for $s \geq 3$; 
see Lemma \ref{proper-action}.
By using the classical theory of proper Lie group actions as in \cite[Ch.~2]{diustermaat}, 
we recognize a principal $\PSL$--bundle $\pi_s: \mathcal{G}(-1^s) \to \mathcal{G}(-1^s)/\PSL$, 
where $\mathcal{G}(-1^s) \subset \racs{s}$ is the open and dense subset of generic rational 1--forms with trivial isotropy group in $\PSL$, 
and $\pi_s$ denotes the natural projection to the orbit space. 
For $s=3 \ (\text{resp. } s=4)$, we prove that the principal $\PSL$--bundle is trivial (resp. nontrivial). 
On the other hand, 
for all $\omega \in \racs{s}\setminus \mathcal{G}(-1^s)$, 
the isotropy group $\PSL_{\omega}$ is a nontrivial finite subgroup of $\PSL$. 
A classical result of F. Klein classifies the finite subgroups of $\PSL$; see \cite[p. 126]{klein20}. 
In our framework, 
the realization problem is which finite subgroups of $\PSL$ are realizable as the isotropy groups of suitable $\omega \in \racs{s}$? 
A positive answer is as follows.
\begin{rproposition}[\ref{realizacion-grupos-finitos}]
 Every finite subgroup $G < \PSL$ appears as the isotropy group of suitable $\omega \in \racs{s}$.
\end{rproposition}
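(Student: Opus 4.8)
\noindent
The plan is to translate the statement into a question about the symmetry of a labelled configuration of points and then, for each $G$, to exhibit an invariant $1$--form whose pole set has symmetry group exactly $G$. By Theorem~\ref{teorema-equivalencia-parametros} I may work in the residues--poles atlas $\Omega^1_{rp}(-1^s)$, in which an element is a set of distinct poles $p_1,\dots,p_s\in\CC$ together with residues $r_1,\dots,r_s\in\C^{\ast}$ subject to $\sum_i r_i=0$. In this model the $\PSL$--action moves the poles and leaves the residues fixed, because the residue of a meromorphic $1$--form at a simple pole is invariant under biholomorphisms; hence $T\in\PSL_{\omega}$ if and only if $T$ permutes the poles while preserving the attached residues. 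In particular $\PSL_{\omega}$ is contained in the setwise stabiliser of the pole set, and $\omega$ is $G$--invariant precisely when the residue is constant along each $G$--orbit of poles. Since conjugating $\omega$ by $S\in\PSL$ replaces $\PSL_{\omega}$ by $S^{-1}\PSL_{\omega}S$, it suffices to realise one representative of each conjugacy class, so by Klein's classification \cite{klein20} I may place $G$ in standard position inside $PSU(2)$.

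\noindent
First I would build the form. Assume $G$ nontrivial and choose points $x_1,x_2\in\CC$ with trivial $G$--isotropy and disjoint orbits $\mathcal{O}_1=G x_1$, $\mathcal{O}_2=G x_2$, so that $|\mathcal{O}_1|=|\mathcal{O}_2|=|G|$. Assign the constant residue $\rho_1=|G|$ to $\mathcal{O}_1$ and $\rho_2=-|G|$ to $\mathcal{O}_2$; then $\rho_1|\mathcal{O}_1|+\rho_2|\mathcal{O}_2|=0$, both residues are nonzero, and $\rho_1\neq\rho_2$. The associated $1$--form
\[
\omega=\sum_{p\in\mathcal{O}_1}\frac{\rho_1}{z-p}\,dz+\sum_{q\in\mathcal{O}_2}\frac{\rho_2}{z-q}\,dz
\]
then lies in $\racs{s}$ with $s=2|G|\geq 4$, has exactly the prescribed simple poles, and is $G$--invariant by the reading of the first paragraph, so $G\subseteq\PSL_{\omega}$.

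\noindent
The heart of the matter is the reverse inclusion $\PSL_{\omega}\subseteq G$, and this is the step I expect to be the main obstacle. Because $\rho_1\neq\rho_2$, any $T\in\PSL_{\omega}$ must send residue--$\rho_1$ poles to residue--$\rho_1$ poles, whence $T(\mathcal{O}_1)=\mathcal{O}_1$ and $T(\mathcal{O}_2)=\mathcal{O}_2$; no symmetry can interchange the two orbits. Thus $\PSL_{\omega}=\mathrm{Stab}_{\PSL}(\mathcal{O}_1)\cap\mathrm{Stab}_{\PSL}(\mathcal{O}_2)$, and it remains to force this intersection down to $G$. The locus of pairs $(x_1,x_2)$ for which the intersection is strictly larger than $G$ is a proper analytic subset of $\CC\times\CC$, so a sufficiently general choice works; to establish properness it is enough to produce, for each of the finitely many conjugacy types of finite subgroups of $\PSL$, one explicit pair of orbits realising symmetry exactly $G$. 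For $G=\langle z\mapsto e^{2\pi i/n}z\rangle$, for instance, I would take $\mathcal{O}_1,\mathcal{O}_2$ to be regular $n$--gons on two concentric circles $|z|=r_1\neq r_2$: a Möbius map preserving $n\geq 3$ points of each circle preserves both circles, hence fixes the common pair $\{0,\infty\}$; among such maps $z\mapsto\lambda z$ the polygon condition forces $\lambda\in\langle e^{2\pi i/n}\rangle$, while the inversions $z\mapsto\mu/z$ are excluded since no single one can preserve two distinct concentric circles. This yields $\PSL_{\omega}=G$ exactly, and I would argue the dihedral and the three polyhedral cases in the same spirit, using the classical vertex, edge--midpoint and face--centre orbits whose full Möbius symmetry is controlled by Klein's list \cite{klein20}.

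\noindent
Finally I would dispose of the degenerate case. When $G$ is trivial the recipe above gives $s=2$, which is too small; instead I would take any $s\geq 3$ generic poles carrying generic residues summing to zero, so that $\omega\in\mathcal{G}(-1^s)$ has trivial isotropy. The same construction with orbits of other admissible sizes realises each $G$ for an infinite range of $s$, establishing that every finite subgroup $G<\PSL$ occurs as $\PSL_{\omega}$ for suitable $\omega\in\racs{s}$.
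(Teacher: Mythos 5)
Your construction is the same mechanism the paper uses --- poles forming a union of $G$--orbits, residues constant on each orbit, distinct between orbits, and summing to zero --- and the easy half is fine: $G\subseteq\PSL_{\omega}$, and since residues are $\PSL$--invariant, any $T\in\PSL_{\omega}$ preserves each orbit separately, so $\PSL_{\omega}=\mathrm{Stab}(\mathcal{O}_1)\cap\mathrm{Stab}(\mathcal{O}_2)$. The gap is exactly in the step you call the heart of the matter. Your genericity argument cannot carry that weight, because it is circular: you propose to prove that the ``bad locus'' is a \emph{proper} analytic subset of $\CC\times\CC$ by exhibiting one pair of orbits whose combined stabilizer is exactly $G$ --- but exhibiting such a pair is precisely the existence statement being proved, so the analytic-set scaffolding adds nothing and the entire burden falls on the explicit examples. (The analyticity claim itself is also not immediate: the locus is a union over candidate extra symmetries, and this is a finite union of analytic sets only when $|\mathcal{O}_1|\geq 3$ pins each candidate down by finite data; for $|G|=2$ the setwise stabilizer of a two--point orbit is a one--dimensional subgroup of $\PSL$, and your ``three points determine a circle'' argument is unavailable.)

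Consequently the proof is only completed for cyclic groups of order $n\geq 3$: the concentric $n$--gon argument is correct as far as it goes, but $\Z_2$ is not covered, and the dihedral and polyhedral cases are dispatched with ``in the same spirit.'' Those are exactly the cases where the paper does its concrete work: for $D_n$ it takes the bipyramid configuration $\rp{i,\ldots,i,-\frac{n}{2}i,-\frac{n}{2}i;\zeta_1,\ldots,\zeta_n,0,\infty}$, and for the polyhedral groups it places poles at the vertices of a platonic solid $S$ together with those of its dual $S^{*}$, with the residue ratio $k\in\{1,4/3,3/4,3/5,5/3\}$ chosen so the residue theorem holds; one must then verify, using Klein's classification (e.g.\ no finite subgroup of $\PSL$ properly contains $A_5$, and the octahedral group does not preserve a single inscribed tetrahedron), that the stabilizer of each configuration is exactly $G$. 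Note also that the vertex, edge--midpoint and face--centre orbits you invoke are not free, so they do not fit your $s=2|G|$ recipe with residues $\pm|G|$; the residues must be rebalanced according to orbit sizes, which is what the paper's constants $k$ accomplish. Until these configurations are written down and their stabilizers actually checked, the statement is established only for $\Z_n$, $n\geq 3$.
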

\noindent Obviously, 
the degree of the divisor of poles $s$ depends on the order of $G$. 
Recalling that the rotation groups of a pyramid, bypyramid and platonic solids, 
the proof is done. 

As a second goal, 
recall that $\omega \in \racs{s}$ is isochronous when all their residues are purely imaginary. 
We study the subfamily $\RI{s} \subset \racs{s}$ of isochronous 1--forms. 
Our result is below.
\begin{rcorollary}[\ref{teorema-RI-variedad}]
 \textit{The subfamily $\RI{s}$ is a $(3s-1)$--dimensional real analytic submanifold of $\racs{s}$.}
\end{rcorollary}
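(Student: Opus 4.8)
The plan is to exhibit $\RI{s}$ as the preimage of a totally real linear subspace under the holomorphic submersion that records the residues, and then to apply the real--analytic regular value theorem. By Theorem \ref{teorema-equivalencia-parametros} I may work in the residues--poles atlas $\Omega^1_{rp}(-1^s)$, in which the residues $r_1,\dots,r_s$ and the poles $p_1,\dots,p_s$ of $\omega$ are local holomorphic coordinates. As every $\omega\in\racs{s}$ is a rational $1$--form on $\CC$, the residue theorem forces $\sum_{j=1}^s r_j=0$, so the residues take values in
\begin{equation*}
H:=\Big\{(r_1,\dots,r_s)\in\C^s:\ \textstyle\sum_{j=1}^s r_j=0\Big\}\cong\C^{s-1}.
\end{equation*}

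First I would verify that the residue map $\rho:\racs{s}\to H$, $\omega\mapsto(r_1,\dots,r_s)$, is a holomorphic submersion: after locally labelling the distinct poles, $(r_1,\dots,r_{s-1},p_1,\dots,p_s)$ are coordinates and $\rho$ is just the projection onto the residue factor, whose fibres---fixing the residues and letting the $s$ poles vary---have the expected dimension $\dim_\C\racs{s}-\dim_\C H=s$. Next I would locate the isochronous condition inside $H$: by definition $\omega$ is isochronous precisely when $\Re{r_j}=0$ for every $j$, that is, when $\rho(\omega)$ lies in
\begin{equation*}
H_{\R}:=\big\{r\in H:\ \Re{r_j}=0\ \text{for all }j\big\}=(i\R)^s\cap H .
\end{equation*}
Using $r_s=-\sum_{j<s}r_j$ one sees $H_{\R}\cong(i\R)^{s-1}$, a totally real subspace of $H\cong\C^{s-1}$ of real dimension $s-1$, hence a closed real--analytic submanifold of real codimension $s-1$ in $H$.

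The last step is the pullback. Since $\rho$ is a real--analytic submersion and $H_{\R}\subset H$ is a real--analytic submanifold, $\RI{s}=\rho^{-1}(H_{\R})$ is a closed real--analytic submanifold of $\racs{s}$ of the same codimension $s-1$; equivalently, in the coordinates $(r_1,\dots,r_{s-1},p_1,\dots,p_s)$ the defining functions $\Re{r_1},\dots,\Re{r_{s-1}}$ have everywhere independent differentials, so $0$ is a regular value. Counting dimensions,
\begin{equation*}
\dim_\R\RI{s}=\dim_\R\racs{s}-(s-1)=(4s-2)-(s-1)=3s-1 .
\end{equation*}

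The verifications just used---submersivity of $\rho$ and independence of the $\Re{r_j}$---are routine; the single point I would stress is conceptual. The purely imaginary condition carves out a \emph{totally real} subspace of the residue directions, so $\RI{s}$ is a genuinely real--analytic, and not a complex, submanifold, while the relation $\sum r_j=0$ lowers the real codimension from $s$ to $s-1$, which is exactly what yields the dimension $3s-1$. Finally, because ``all residues are purely imaginary'' is symmetric in the poles, it is independent of the local labelling, so the local descriptions glue into the asserted global real--analytic submanifold structure on $\RI{s}$.
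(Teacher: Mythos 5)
Your proposal is correct and takes essentially the same route as the paper: the paper's one--line proof also works in the residues--poles atlas $\mathfrak{A}_{rp}$ and observes that the isochronous condition cuts out the $(s-1)$--dimensional real analytic submanifold $\Im{H_s}$ of $H_s$, which is precisely your totally real locus $H_{\R}$ (intersected with $(\C^*)^s$, since simple poles have nonzero residues). Your submersion/regular--value formulation, together with the remark on independence of the local labelling of poles, just makes explicit the details the paper leaves implicit.
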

For the proof, 
we use the complex atlas by residues--poles. 
Proposition \ref{realizacion-grupos-finitos} is fulfilled for suitable $\omega \in \RI{s}$. 
The residues are naturally a set of $\PSL$--invariant functions. 
They can be used in order to describe a realization of 1--forms; 
see \cite{isocrono}. 
The $\PSL$--action on $\RI{s}$ is well--defined. 
Hence, 
the quotients $\racs{s}/\PSL$ and $\RI{s}/\PSL$ admit a complex and a real stratification by orbit types, repectively. 
For $s \geq 4$, in order to get a complete set of $\PSL$--invariant functions we enlarge the set of residues by adding the cross--ratio of poles and explicitly recognize realizations for the quotients $\racs{s}/\PSL$ and $\RI{s}/\PSL$; 
see respectively Proposition \ref{quotient} and Corollary \ref{quotient2}.\\
In Section \ref{sec-surfaces}, for $\omega \in \racs{s}$, resp. $\omega \in \RI{s}$, we obtain a realization for the quotient of the associated flat surfaces $S_{\omega}$ up to isometries $\mathfrak{M}(-s)$, resp. $\mathcal{RI}\mathfrak{M}(-s)$, extending naturally the $\PSL$--action.\\
 
Our results have applications in dynamical systems and the geometry of flat surfaces since there is a one--to--one \textit{correspondence} between rational 1--forms $\omega = (Q(z)/P(z)) dz$, 
oriented rational quadratic differentials $\omega \otimes \omega = (Q^2(z)/ P^2(z))dz^2$, 
rational complex vector fields $X_{\omega} = P(z)/Q(z) \ \partial/\partial z$, 
pairs of singular real analytic vector fields $(\Re{X_{\omega}},$ $\Im{X_{\omega}})$ on $\CC \setminus \{ Q = 0\}$, 
and singular flat surfaces $S_{\omega}=(\CC , g_{\omega})$ provided with two real singular geodesic foliations. 
The metrics $g_{\omega}$ are the associated to the quadratic differentials $\omega \otimes \omega$, 
and the foliations come from the horizontal and vertical trajectories. 
This correspondence is used in many works, \textit{e.g.} \cite{kerckhoff,jesus1, alvaro1}.
For $\omega \in \racs{s}$, 
its associated quadratic differential $\omega \otimes \omega$ has poles of multiplicitie 2 and they were studied by K. Strebel in \cite{strebelB, strebel}.
\\

Historically, C. Huygens \cite[p.~72]{whittaker} gave formulas for the period of isochronous centers in the model of a simple pendulum as differential form $(i\lambda/z)dz$.
Trying to reach a contemporary point of view, we recall the following statements.\\
Quadratic differentials with closed trajectories were first considered by O. Teich-m\"uller in his \textit{``Habilitationsschrift"} \cite{teichmuller}. 
On $M_g$, 
K. Strebel \cite{strebelB, strebel} proved that certain quadratic differentials with closed regular horizontal trajectories realize the extremal metric problem introduced by J. A. Jenkins \cite{jenkins}. \\
Another interesting facet of isochronous 1--forms $\omega \in \RI{s}$, comes from dynamical systems. 
The phase portrait of the associated vector field $\Re{X_{\omega}}$ is a union of isochronous centers or annulus. 
In other words, any pair of trajectories of $\Re{X_{\omega}}$ that share a center basin have the same period. 
Looking at isochronous centers on $\R^2$; 
P. Marde{\v{s}}i\'c, C. Rousseau and B. Toni \cite{mardesic} studied the linearization problem, 
and L. Gavrilov \cite{gavrilov} considered the appearance of isochronous centers in polynomial of Hamiltonian systems on $\C^2$ and its relation to the famous Jacobian conjecture. 
A constructive result for isochronous vector fields on $\C$, by using the residues, is provided by J. Muci\~no-Raymundo in \cite[\S~8]{jesus1}. 
A topological and analytic classification of complex polynomials vector fields on $\C$ with only isochronous centers was performed by M. E. Fr\'ias-Armenta and J. Muci\~no--Raymundo in \cite{isocrono}.

\section{Rational 1--forms with simple poles}
\label{S-families}

\subsection{Stratification}

We define a stratification on the set $\racs{s}$ of rational 1--forms on the Riemann sphere \CC, 
having exactly $-s \geq -2$ simple poles. 
First, recall that the infinite dimensional vector space of rational 1--forms admits a stratification fixing the multiplicities of the zeros $\{k_1, \ldots, k_m\}$ and poles $\{-s_1, \ldots, -s_n\}$, 
where $k_j, s_{\iota} \in \N$. 
The stratum of rational 1--forms with these multiplicities is denoted by $\Omega^1\{k_1, \ldots, k_m ; -s_1, \ldots, -s_n \}$, 
they are connected $(m+n+1)$--dimensional complex manifolds in the vector space of rational 1--forms; 
our notation is similar to \cite{jesus1, moguel2}. 
On the other hand, 
for $g \geq 2$ the stratum are not necessarily connected. 
M. Kontsevich and A. Zorich describe the connected components of holomorphic 1--forms in each stratum on $M_g$; 
see \cite{kontsevich}.\\
Our framework allows us to study 
\begin{equation}\label{omega-variedad}
\racs{s} = \bigsqcup \Omega^1\{ k_1, \ldots, k_m ; \underbrace{-1,\ldots, -1}_{\text{$s$}} \},
\end{equation}
where the union takes all the multiplicities $\{k_1, \ldots, k_m; -1, \ldots, -1 \}$ such that $\{k_1, \ldots , k_m \}$ is an integer partition of $s-2$, 
\textit{i.e.} the sum $k_1 + \ldots + k_m = s - 2$. 
The expression $(-1^s)$ in \eqref{omega-variedad} is motivated by the ``exponential" notation for multiple  poles of the same degree in the stratification by multiplicities of rational 1--forms; see \cite{kontsevich, corentin}. 

\subsection{Polynomials}\label{polynomials}

Let us recall the existence of two natural complex atlases for polynomials $\C[z]_{=s}$ with degree $s$. For complex manifolds, we use notation as in \cite[Ch.~IV]{fritzsche}. 
First, given the natural homeomorphism
$$
\begin{array}{rcl}
f_1: \C[z]_{=s} & \longrightarrow & \C^{s+1} \setminus \{ b_s = 0\} \\
b_sz^s + \ldots + b_0 & \longmapsto & (b_s, \ldots, b_0),
\end{array}
$$
we obtain that $(\C[z]_{=s}, f_1)$ is an $(s+1)$--dimensional complex coordinate system in $\C[z]_{=s}$. 
Clearly, the subset of polynomials with degree $s$ and simple roots $\C[z]_{=s} \setminus \mathcal{D}(P, P')$ is open, 
where $\mathcal{D}(P, P')$ denotes the discriminant of $P$. \\
Second, the action of the symmetric group $\Sim{s}$ of $s$ elements on $\CC^{s} \setminus \Delta := \{(p_1, \ldots, p_s) \in \CC^{s} \ | \ p_{\iota} \not= p_{\kappa}, \ \text{for all }\iota \not= \kappa \}$ is properly discontinous. 
In fact, the quotient $(\CC^s \setminus \Delta) / \Sim{s}$ is an $s$--dimensional complex manifold. 
Moreover, 
$\left(\C[z]_{=s}\setminus \mathcal{D}(P,P'), \nu_s \circ f_2 \right)$ is an $(s+1)$--dimensional complex coordinate system in $\C[z]_{=s} \setminus \mathcal{D}(P,P')$, 
where the map
$$
\begin{array}{rcl}
f_2: \C[z]_{=s} \setminus \mathcal{D}(P,P') & \longrightarrow  & \C^* \times \left(\frac{\CC^{s} \setminus \Delta }{\Sim{s}} \right) \\ 
b_s\prod_{\iota=1}^s (z - p_{\iota}) & \longmapsto & (b_s, \{p_1, \ldots, p_s \})
\end{array}
$$
is a natural bijection and
\begin{equation}\label{viete-map}
\begin{array}{rcl}
\nu_s : \C^* \times \left( \frac{\CC^s \setminus \Delta}{\Sim{s}} \right) & \longrightarrow & \C^{s+1} \setminus \{ b_s = 0 \} \\
(b_s, \{p_1, \ldots, p_s\}) & \longmapsto & \left(b_s, -b_s\left(\sum_{\iota=1}^s p_{\iota}\right), \ldots, (-1)^sb_s\prod_{\iota=1}^s p_{\iota} \right),
\end{array}
\end{equation} 
is called the \textit{Vi\`ete} map. \\
Finally, 
it is easy to prove that the coordinate systems $(\C[z]_{=s}\setminus \mathcal{D}(P,P'), f_1)$ and $(\C[z]_{=s} \setminus \mathcal{D}(P,P'), \nu_s \circ f_2)$ are hollomophically compatible; 
see \cite{katz}. 
A remarkable fact is that some properties of polynomials are easy to see in one coordinate system but others are kept hidden.\\

\subsection{Complex atlases on $\racs{s}$}

Similarly as in Section \ref{polynomials}, 
three equivalents $(2s-1)$--dimensional complex atlases on $\racs{s}$ will be constructed.

\begin{description}[leftmargin=0cm]
\item[1) \textbf{Coefficients.}] We consider the Zariski open subset of $\CP^{2s-1}$,  
$$
\Omega_{coef}^{1}(-1^s) := \left\{ [a_{s-2} : \ldots : a_0 : b_s : \ldots : b_0 ] \in \CP^{2s-1} \ \ \left| \ \  \begin{array}{rcl}
														\mathcal{D}(P,Q) &\not=& 0, \\
														\mathcal{D}(P,P') &\not=& 0	
														\end{array} \right. \right\},
$$
where $Q(z) = a_{s-2}z^{s-2} + \ldots + a_0$, $P(z) = b_sz^{s} + \ldots + b_0$ and $\mathcal{D}(P,Q)$ denotes the resultant of the polynomials $P$ and $Q$. 
If $\{ (U^{coef}_j, \varphi^{coef}_j)\}$ denotes the complex atlas on $\Omega^1_{coef}(-1^s)$, 
then $\mathfrak{A}_{coef} := \{ (f_{coef}^{-1}(U^{coef}_j), \varphi^{coef}_j \circ f_{coef} )\}$ is a complex atlas on $\racs{s}$, 
where
$$
\begin{array}{rcl}
f_{coef}: \racs{s} & \longrightarrow & \Omega^1_{coef}(-1^s) \\
\displaystyle \omega = \frac{a_{s-2}z^{s-2} + \ldots + a_0}{b_sz^s + \ldots + b_0}dz & \longmapsto & [a_{s-2} : \ldots : a_0 : b_s : \ldots : b_0],
\end{array}
$$
is a natural bijection map. 
\item[2) \textbf{Zeros--poles.}] We consider
$$
M_s := \left.\left\{\left\{c_1, \ldots, c_{s-2}, p_1, \ldots, p_s\right\} \in \frac{\CC^{s-2}}{\Sim{s-2}} \times \frac{\CC^s \setminus \Delta}{\Sim{s}} \ \right| c_j \not= p_{\iota} \ \right\}.
$$
Recalling that there exists a biholomorphism between $\CC^s/\Sim{s}$ and $\CP^s$, $M_s$ is a $(2s-2)$--dimensional open and dense complex submanifold of $\CP^{s-2}\times \CP^s$.  
Consider the transition functions for a nontrivial principal $\C^{*}$--bundle over $M_s$ as follows. 
Let $u= \left\{ c_1, \ldots, c_{s-2}, p_1, \ldots, p_s \right\}	\in M_s$ and set
$$
	Q_u(z) := \left\{ \begin{array}{ll}
			(z-c_1)\cdots (z-c_{s-2}) & \text{where $c_j\not= \infty$ for all $j$,} \\
			(z-c_1)\cdots(z - c_{j-1})(z- c_{j+1})\cdots(z-c_{s-2}) & \text{if $c_j=\infty$,}
		       \end{array}\right.
$$
$$
	P_u(z) := \left\{ \begin{array}{ll}
			(z-p_1)\cdots (z-p_{s}) & \text{where $p_{\iota}\not= \infty$ for all $\iota$,} \\
			(z-p_1)\cdots(z - p_{\iota-1})(z- p_{\iota+1})\cdots(z-p_{s}) & \text{if $p_{\iota}=\infty$.}
		       \end{array}\right.
$$
For $\alpha \in  I := \{1, 2, \ldots, 2s-1 \}$, 
we define $\Omega_{zp}^{1}(-1^s)$ as the total space of the principal $\C^*$--bundle over $M_s$ with the transition functions 
\begin{equation}{\label{funcion-transicion}}
\begin{array}{rcl}
		\phi_{\alpha \beta} : \mathcal{U}_{\alpha} \cap \mathcal{U}_{\beta} &\longrightarrow & \C^{*} \\
		u                                                                                                                                   & \longmapsto &\displaystyle \frac{Q_u(\alpha)}{P_u(\alpha)}\left(\frac{P_u(\beta)}{Q_u(\beta)}\right),
\end{array}
\end{equation} 
where $\mathcal{U}_\alpha := \left\{ u \in M_s \  | \  Q_u(\alpha) \not= 0 \text{ and } P_u(\alpha)  \not= 0 \right\}$. 
If $\{(U^{zp}_j, \varphi^{zp}_j)\}$ denotes the complex atlas on $\Omega^1_{zp}(-1^s)$, 
then $\mathfrak{A}_{zp} = \{ (f_{zp}^{-1}(U^{zp}_j), \varphi^{zp}_j \circ f_{zp} ) \}$ is a complex atlas on $\racs{s}$ where 
$$
\begin{array}{rcl}
f_{zp}: \racs{s} & \longrightarrow & \Omega^1_{zp}(-1^s) \\
\displaystyle \omega = \lambda\frac{(z-c_1)\cdots(z-c_{s-2})}{(z-p_1)\cdots(z-p_s)} dz & \longmapsto & [\{c_1, \ldots c_{s-2}, p_1, \ldots,  p_s\}, \lambda],
\end{array}
$$
is a bijection map.
\item[3) \textbf{Residues--poles.}] Let $H_s := \{(r_1, \ldots, r_s) \in (\C^*)^s \ | \ r_1 + \ldots + r_s = 0 \}$ and recall that $\CC^{s} \setminus \Delta = \{(p_1, \ldots, p_s) \in \CC^{s} \ | \ p_{\iota} \not= p_{\kappa}, \ \text{for all }\iota \not= \kappa \}$. 
We consider a diagonal action of the symmetric group $\Sim{s}$ of $s$ elements
	\begin{equation}\label{accion-residuos-polos}
	 \begin{array}{lll}
	\Sim{s} \times (H_s \times (\CC^s\setminus \Delta)) &\longrightarrow &H_s \times (\CC^s\setminus \Delta) \\
	(\sigma, (r_1, \ldots, r_s, p_1, \ldots, p_s))        &\longmapsto      & (r_{\sigma(1)}, \ldots, r_{\sigma(s)}, p_{\sigma(1)}, \ldots, p_{\sigma(s)}).
	\end{array}
	\end{equation}
Clearly, 
the action above is properly discontinuous and the quotient
$$
\Omega_{rp}^{1}(-1^s) := \frac{H_s \times (\CC^{s} \setminus \Delta)}{\Sim{s}}.
$$
is a $(2s-1)$--dimensional open complex manifold. 
We denote the equivalence class under the action \eqref{accion-residuos-polos} as $\rp{r_1, \ldots, r_s; p_1, \ldots, p_s}$. 
Geometrically, 
an element in $\Omega_{rp}^{1}(-1^s)$ is a configuration\footnote{We convene that a configuration is an unordered set of points different between them.} 
of $s$ points $\{p_{\iota}\}$ in the Riemann sphere with weights $\{r_{\iota}\} \subset \C^*$ which satisfy the residue theorem.
If $\{(U^{rp}_j, \varphi^{rp}_j)\}$ denotes the complex atlas on $\Omega^1_{rp}(-1^s)$, 
then $\mathfrak{A}_{rp} := \{ (f_{rp}^{-1}(U^{rp}_j), \varphi^{zp}_j \circ f_{zp} ) \}$ is a complex atlas on $\racs{s}$ where 
$$
\begin{array}{rcl}
f_{rp}: \racs{s} & \longrightarrow & \Omega^1_{rp}(-1^s) \\
\displaystyle \omega = \sum_{\iota = 1}^s \frac{r_{\iota}}{z-p_{\iota}} dz & \longmapsto &  \rp{r_1,\ldots,r_s;p_1, \ldots ,p_s}, 
\end{array}
$$
is a bijection map. 
For a 1--form $\omega$ with the pole $p_{\kappa} = \infty$, 
the term $r_{\kappa}/(z - p_{\kappa})$ is omitted in the sum above. 
Obviously, 
the residue theorem is the unique obstruction to realize $\omega \in \racs{s}$. 
Moreover, 
$$
2s-1 = dim_{\C} (\racs{s}) \geq dim_{\C} (\Omega^1 \{ k_1, \ldots, k_m ; \underbrace{-1,\ldots, -1}_{\text{$s$}} \}) = m+s+1.
$$
\end{description}
Recall that the complex atlases above are valid only for rational 1--forms with simple poles. 
The study of rational 1--forms with poles of multiplicitie grater or equal than 2, 
will be consider in a future work. 
Our main result is as follows. 

\begin{theorem}\label{teorema-equivalencia-parametros}
	The complex manifolds $\Omega_{coef}^{1}(-1^s)$, $\Omega_{zp}^{1}(-1^s)$ and $\Omega_{rp}^{1}(-1^s)$ are biholomorphic.
\end{theorem}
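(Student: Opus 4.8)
The plan is to use $\Omega^{1}_{coef}(-1^s)$ as a hub and to exhibit two explicit biholomorphisms $\Phi_{rp}\colon \Omega^{1}_{rp}(-1^s)\to \Omega^{1}_{coef}(-1^s)$ and $\Phi_{zp}\colon \Omega^{1}_{zp}(-1^s)\to \Omega^{1}_{coef}(-1^s)$; the third biholomorphism is then the composition $\Phi_{rp}\circ\Phi_{zp}^{-1}$ and no separate work is needed. The conceptual starting point is that $f_{coef}$, $f_{zp}$ and $f_{rp}$ are three coordinate descriptions of the \emph{same} underlying set $\racs{s}$ of 1--forms, so that $\Phi_{rp}:=f_{coef}\circ f_{rp}^{-1}$ and $\Phi_{zp}:=f_{coef}\circ f_{zp}^{-1}$ are already bijections; the entire content is to prove that these bijections and their inverses are holomorphic. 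Both maps are incarnations of the \textit{Vi\`ete} map $\nu_s$ of \eqref{viete-map}.

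For $\Phi_{rp}$, I would take a representative $(r_1,\dots,r_s;p_1,\dots,p_s)$ of a class $\rp{r_1,\dots,r_s;p_1,\dots,p_s}$ and pass from the partial--fraction form $\sum_{\iota} r_{\iota}/(z-p_{\iota})$ to the single fraction $Q/P$ by setting $P(z)=\prod_{\iota}(z-p_{\iota})$ and $Q(z)=\sum_{\iota} r_{\iota}\prod_{\kappa\neq\iota}(z-p_{\kappa})$, and declaring $\Phi_{rp}$ to send the class to the projective point of coefficients of $(Q,P)$. Three verifications are routine once set up. First, the assignment is invariant under the diagonal $\Sim{s}$--action \eqref{accion-residuos-polos}, since reindexing $(r_{\iota},p_{\iota})$ leaves $P$ and $Q$ unchanged, so it descends to the quotient. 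Second, the image lies in the Zariski--open locus: the $p_{\iota}$ are distinct, hence $\mathcal{D}(P,P')\neq 0$; and a common root of $P$ and $Q$ could only be some $p_{\iota}$, where $Q(p_{\iota})=r_{\iota}P'(p_{\iota})\neq 0$ because $r_{\iota}\in\C^{*}$ and the root is simple, hence $\mathcal{D}(P,Q)\neq 0$. (The constraint $r_1+\dots+r_s=0$ defining $H_s$ is exactly what forces the $z^{s-1}$--coefficient of $Q$ to vanish, so $\deg Q\le s-2$, matching the coefficient model.) Third, holomorphy follows by lifting to the cover $H_s\times(\CC^{s}\setminus\Delta)$, on which the coefficients of $P$ and $Q$ are polynomials in $(r_{\iota},p_{\iota})$ and the $\Sim{s}$--action is properly discontinuous, so the descended map is holomorphic.

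The map $\Phi_{zp}$ is treated analogously: from $\omega=\lambda\prod_j(z-c_j)/\prod_{\iota}(z-p_{\iota})$ I would apply $\nu_s$ to the poles to produce the $b_j$ and $\nu_{s-2}$ to the zeros (scaled by $\lambda$) to produce the $a_j$, again checking $\Sim{s-2}\times\Sim{s}$--invariance and that the discriminant and resultant are nonzero on $M_s$. Here the only genuinely structural point is that the fibre coordinate $\lambda\in\C^{*}$ matches the principal $\C^{*}$--bundle whose transition functions are \eqref{funcion-transicion}: the functions $\phi_{\alpha\beta}=\bigl(Q_u(\alpha)/P_u(\alpha)\bigr)\bigl(P_u(\beta)/Q_u(\beta)\bigr)$ are precisely the ratios by which the normalizing factor changes when one re-expresses $Q/P$ through different evaluation points, so $\Phi_{zp}$ intertwines the bundle structure with the scaling ambiguity hidden in the projective class, and is a bundle isomorphism over $M_s$.

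The main obstacle, for both maps, is the holomorphy of the \emph{inverses}. Given a class $[a_{s-2}:\dots:b_0]$, one recovers the poles as the roots of $P$ and the residues as $r_{\iota}=Q(p_{\iota})/P'(p_{\iota})$, with $\sum_{\iota}r_{\iota}=0$ automatic from $\deg Q\le s-2$; but "the roots of $P$" are unordered and a priori only continuous in the coefficients. The point I would stress is that on the discriminant--nonzero locus $\mathcal{D}(P,P')\neq 0$ the roots are simple, so the implicit function theorem yields local holomorphic branches $p_{\iota}=p_{\iota}(b)$, whence the $r_{\iota}$ are locally holomorphic; the unavoidable labeling ambiguity is exactly the $\Sim{s}$ (and $\Sim{s-2}$) identification, so the inverse is a well--defined holomorphic map into the symmetric quotient. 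The remaining care is purely bookkeeping at the points $p_{\iota}=\infty$ or $c_j=\infty$, handled through the two standard charts of $\CC$ and the conventions for $P_u,Q_u$ already fixed in the definition of $M_s$, consistently with $\nu_s$. Assembling these gives that $\Phi_{rp}$ and $\Phi_{zp}$ are biholomorphisms, and hence that the three manifolds are biholomorphic.
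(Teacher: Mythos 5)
Your proposal is correct and takes essentially the same route as the paper: both use $\Omega^{1}_{coef}(-1^s)$ as the hub and build the same two Vi\`ete--based maps (your $\Phi_{rp}$ is the paper's $C_s$, sending $\rp{r_1,\ldots,r_s;p_1,\ldots,p_s}$ to the coefficients of $Q=\sum_{\iota} r_{\iota}\prod_{\kappa\neq\iota}(z-p_{\kappa})$ and $P=\prod_{\iota}(z-p_{\iota})$, and your $\Phi_{zp}$ is the paper's $\mathcal{F}_s$, including the check of compatibility with the transition functions \eqref{funcion-transicion}). The only cosmetic difference is the final holomorphy step: the paper exhibits a block--triangular Jacobian of $C_s$ with linearly independent rows and invokes the inverse function theorem, whereas you invert explicitly via the implicit function theorem on the simple--root locus and the residue formula $r_{\iota}=Q(p_{\iota})/P'(p_{\iota})$ --- equivalent arguments.
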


\begin{proof}
We construct explicitly two biholomorphisms from $\Omega_{coef}^1(-1^s)$ to $\Omega_{rp}^1(-1^s)$ and $\Omega_{zp}^1(-1^s)$ to $\Omega_{coef}^1(-1^s)$.\\ 
First, 
we show that $\Omega^{1}_{rp}(-1^s)$ and $\Omega^{1}_{coef}(-1^s)$ are biholomorphic. 
For $\rp{P} = \left\langle r_1, \ldots,\right.$ $\left. r_s; p_1, \ldots, p_s \right\rangle \in \Omega_{rp}^{1}(-1^s)$, 
consider $C_s:\Omega_{rp}^{1}(-1^s)$ $\longrightarrow  \Omega_{coef}^{1}(-1^s)$ such that
$$
C_s\rp{P} := \left\{\begin{array}{lll}
										\displaystyle \left[-\sum_{j= 1}^s r_j \sum_{\iota \not= j} p_{\iota} :\sum_{j=1}^{s} r_j \sum_{\iota, \kappa \not= j} p_{\iota}p_{\kappa}:\ldots: \right.&& \\
\displaystyle \left. (-1)^{s-1}\sum_{j=1}^{s}r_j \prod_{\iota \not= j}p_{\iota}: \nu_{s}(1, \left\{p_1,\ldots,p_s\right\} )\right] &\text{for }p_{\iota} \in \C, & \\
										&& \\
										\displaystyle \left[\sum_{j \not= \kappa }^s r_j :-\sum_{j \not= \kappa }^{s} r_j \sum_{\iota \not= j} p_{\iota}:\ldots:\right.& &\\
\displaystyle \left. (-1)^{s-2} \sum_{j \not= \kappa }^{s} r_j \prod_{\iota \not= j} p_{\iota}:0: \nu_{s-1}(1,\left\{p_1,\ldots,\widehat{p}_{\kappa}, \ldots, p_s\right\}) \right] &\text{for }p_{\kappa} = \infty, & 
										\end{array}\right.	
$$
where $\nu_{s}$ is the \textit{Vi\`ete} map in \eqref{viete-map}. 
The hat over the pole $p_{\kappa}$ indicates that it is omitted. 
A direct computation prove that the map $C_s$ is a bijective map. 
If $p_j \not= \infty$ for all $j=1,\ldots s$, 
then the Jacobian matrix is
$$
D{C}_s(r_1,\ldots,r_s,p_1,\ldots,p_s) = \left(\begin{array}{c|c} 
								A & * \\  \hline
								0 &  D{\nu}^{o}_{s}\{p_1,\ldots,p_s\} 
							     \end{array}\right),
$$
where
\begin{equation}\label{matriz-vieta}
							A = \left(\begin{array}{cccc}
							\displaystyle \sum_{j\not=1} p_j  &\displaystyle  \sum_{j\not=2} p_j & \ldots &\displaystyle  \sum_{j\not=s} p_j \\
							\displaystyle -\sum_{j, \iota \not=1} p_jp_{\iota}  &\displaystyle -\sum_{j, \iota \not=2} p_jp_{\iota} & \ldots &\displaystyle -\sum_{j, \iota \not=s} p_jp_{\iota} \\
							\vdots & \vdots & & \vdots \\
							\displaystyle (-1)^{s}\prod_{j\not=1} p_j  &\displaystyle  (-1)^{s}\prod_{j\not=2} p_j & \ldots &\displaystyle  (-1)^{s}\prod_{j\not=s} p_j \\
					    \end{array}\right),
\end{equation}
and $\nu^o_s$ denotes the \textit{Vi\`ete} map $\nu_s$ by removing the first coordinate. 
The rows of $D{C}_s$ are linear independent, 
and the map $C_s$ is a biholomorphism. 
The case $p_j = \infty$ is analogous. 
We are done, $\Omega^{1}_{coef}(-1^s)$ is biholomorphic to $\Omega^{1}_{rp}(-1^s)$.\\

Secondly, we prove that $\Omega_{zp}^{1}(-1^s)$ and $\Omega_{coef}^{1}(-1^s)$ are biholomorphic. 
For $u = \left\{ c_1,\ldots,\right.$ $\left.c_{s-2},p_1,\ldots,p_s \right\} \in \mathcal{U}_{\alpha} \subset M_s$, 
we consider the map
\begin{eqnarray*}
\mathcal{F}_s: \Omega_{zp}^{1}(-1^s) & \longrightarrow & \Omega_{coef}^{1}(-1^s) \\
           \left[ u, \lambda \right] & \longmapsto &  \left[\lambda \frac{P_u(\alpha)}{Q_{u}(\alpha)}\nu_{s-2}(1, \left\{c_1, \ldots, c_{s-2}\right\}) : \nu_s(1, \left\{p_1, \ldots, p_s \right\}) \right].
\end{eqnarray*} 
For all $\alpha, \beta \in I$, the transition functions $\{ \phi_{\beta \alpha} \}$ make that the diagram below conmutes.
$$
		\xymatrix{ (\mathcal{U}_{\alpha} \cap \mathcal{U}_{\beta}) \times \C^{*} \ar@{->}^{\phi_{\beta \alpha}}[rr] \ar@{->}_{\mathcal{F}_s}[rd] & & (\mathcal{U}_{\alpha} \cap \mathcal{U}_{\beta}) \times \C^{*} \ar@{->}^{\mathcal{F}_s}[ld] \\
                                                               &\Omega_{coef}^{1}(-s), &
				}
$$ 
In fact, the map $\mathcal{F}_s$ is a biholomorphism.  
\end{proof}

From now, 
we only use the complex atlas by residues--poles $\mathfrak{A}_{rp}$ on $\racs{s}$. 
However, 
by Theorem \ref{teorema-equivalencia-parametros} the results in this paper are valid independently of the complex atlas.

\begin{definition}
A rational 1--form $\omega \in \racs{s}$ is \emph{isochronous} when all their residues are purely imaginary. 
The family of rational isochronous 1--forms is denoted by
$$
 \RI{s} :=\left\{ \omega \in \racs{s}  \ \left| \  \omega \text{ is isochronous} \right. \right\}.
$$
\end{definition}

\begin{corollary}\label{teorema-RI-variedad}
The subfamily $\RI{s}$ is a $(3s-1)$--dimensional real analytic submanifold of $\racs{s}$.
\end{corollary}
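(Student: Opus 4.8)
The plan is to carry out the whole argument in the residues--poles model $\Omega^1_{rp}(-1^s) = (H_s\times(\CC^s\setminus\Delta))/\Sim{s}$; this is legitimate since Theorem \ref{teorema-equivalencia-parametros} identifies the three atlases up to biholomorphism, and the isochronous condition ``$\Re{r_\iota}=0$ for all $\iota$'' is manifestly invariant under the diagonal action \eqref{accion-residuos-polos}. First I would record that this action is \emph{free}: a permutation fixing $\rp{r_1,\ldots,r_s;p_1,\ldots,p_s}$ must fix every pole $p_\iota$, and the poles are pairwise distinct in $\CC^s\setminus\Delta$, hence the permutation is the identity. Being also properly discontinuous, the quotient projection $\pi\colon H_s\times(\CC^s\setminus\Delta)\to\Omega^1_{rp}(-1^s)$ is a local biholomorphism. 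Since being a real analytic submanifold is a local property preserved by local diffeomorphisms, and since $\pi^{-1}(\RI{s})$ is $\Sim{s}$--invariant, it suffices to show that
\[
\widetilde{\mathcal{R}} := \left\{(r,p)\in H_s\times(\CC^s\setminus\Delta) \ \middle|\ \Re{r_\iota}=0 \ \text{for all } \iota\right\}
\]
is a real analytic submanifold of real dimension $3s-1$.

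Next I would split off the pole factor. As $\CC^s\setminus\Delta$ is an open complex submanifold (of real dimension $2s$) carrying no constraint, we have $\widetilde{\mathcal{R}} = H_s^{\mathrm{iso}}\times(\CC^s\setminus\Delta)$ with $H_s^{\mathrm{iso}} := \{\,r\in H_s \mid \Re{r_\iota}=0 \ \text{for all } \iota\,\}$, so everything reduces to proving that $H_s^{\mathrm{iso}}$ is a real analytic submanifold of $H_s$ of real dimension $s-1$.

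For this, recall that $H_s$ is a complex submanifold of $(\C^*)^s$ of complex dimension $s-1$, hence of real dimension $2s-2$. Writing $r_\iota = x_\iota + i y_\iota$, the isochronous locus is cut out by the $\R$--linear equations $x_\iota=0$, $\iota=1,\ldots,s$. The one place where genuine care is needed is that these $s$ equations are \emph{not} independent on $H_s$: the residue--theorem constraint $\sum_\iota r_\iota=0$ forces $\sum_\iota x_\iota = \Re{\sum_\iota r_\iota}=0$, so the vanishing of any $s-1$ of the $x_\iota$ already forces the last. I would therefore work with the real analytic map $\tilde\rho\colon H_s\to\R^{s-1}$, $\tilde\rho(r)=(x_1,\ldots,x_{s-1})$, and check that it is a submersion with $0$ a regular value: in the local real coordinates $(x_1,\ldots,x_{s-1},y_1,\ldots,y_{s-1})$ on $H_s$ obtained by solving $r_s=-\sum_{\iota<s}r_\iota$, the map $\tilde\rho$ is the linear projection onto the $x$--block, whose differential is surjective everywhere. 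By the real analytic regular value theorem, $H_s^{\mathrm{iso}}=\tilde\rho^{-1}(0)$ is a real analytic submanifold of codimension $s-1$, thus of dimension $(2s-2)-(s-1)=s-1$; the remaining requirement $r_\iota=iy_\iota\neq 0$ is the open condition $y_\iota\neq 0$ and does not affect this conclusion.

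Combining the two factors, $\widetilde{\mathcal{R}}$ has real dimension $(s-1)+2s=3s-1$, and pushing it forward through the local biholomorphism $\pi$ shows that $\RI{s}$ is a real analytic submanifold of $\racs{s}$ of dimension $3s-1$. I expect the main obstacle to be exactly the dependency among the defining equations produced by the residue theorem: it lowers the codimension from the naive value $s$ to $s-1$, and handling it correctly — by discarding one equation, equivalently by taking the hyperplane $\{\sum_\iota w_\iota=0\}$ as the target of the real--part map — is what makes the final dimension equal $3s-1$ rather than $3s-2$.
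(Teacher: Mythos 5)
Your proposal is correct and follows essentially the same route as the paper's (very terse) proof: the paper likewise works in the residues--poles atlas $\mathfrak{A}_{rp}$ and rests on the fact that $\Im{H_s}=\{(ir_1,\ldots,ir_s)\in H_s \mid r_\iota\in\R^*\}$ is an $(s-1)$--dimensional real analytic submanifold of $H_s$, so that the isochronous locus is locally $\Im{H_s}\times(\CC^s\setminus\Delta)$ of real dimension $(s-1)+2s=3s-1$. Your write-up simply supplies the details the paper leaves implicit (freeness of the $\Sim{s}$--action, the regular value argument accounting for the dependency forced by the residue theorem, and descent through the covering projection), all of which are sound.
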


\begin{proof}
The result follows using the complex atlas by residues--poles $\mathfrak{A}_{rp}$ and the $(s-1)$--dimensional real analytic submanifold $\Im{H_s} := \{(ir_1, \ldots , ir_s) \in H_s \ | \  r_{\iota} \in \R^*\}$ of $H_s$. 
\end{proof}

\section{Classification of isotropy groups}
\label{Sec:action}

\subsection{The $\PSL$-action}

In this section, 
we prove that the natural holomorphic $\PSL$--action on $\racs{s}$, 
defined as
\begin{equation}\label{PSL-action}
	\begin{array}{rcl}
		\mathcal{A}_s: \PSL \times \racs{s} &\longrightarrow& \racs{s} \\
		(T, \omega)                       &\longmapsto& T_*\omega,
	\end{array}
\end{equation}
is proper for $s\geq 3$. 
\begin{remark}
\begin{upshape}
Using the complex atlas by residues--poles $\mathfrak{A}_{rp}$, 
the expression for the action is  
$$
\mathcal{A}_s(T, \rp{r_1, \ldots r_s; p_1, \ldots, p_s}) = \rp{r_1, \ldots, r_s ; T(p_1), \ldots, T(p_s)}.
$$
The residues are a set of $\PSL$--invariant functions under the above action.
\end{upshape}
\end{remark}
The class of an $\omega$ is denoted by $\rp{\rp{\omega}} \in \racs{s}/\PSL$. 
Recall the definition of proper action as in \cite[p.~53]{diustermaat}, 
we have the next result. 
\begin{lemma}\label{proper-action}
 For $s \geq 3$, the holomorphic (resp. real analytic) $\PSL$--action $\mathcal{A}_s$ on $\racs{s}$ (resp. on $\RI{s}$) is proper.
\end{lemma}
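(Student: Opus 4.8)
The plan is to verify the definition of properness directly: the action map
\[
\Psi_s : \PSL \times \racs{s} \longrightarrow \racs{s}\times\racs{s},
\qquad (T,\omega)\longmapsto (T_*\omega,\,\omega),
\]
is proper, i.e.\ the preimage of every compact set is compact. Equivalently, following \cite[p.~53]{diustermaat}, I would show that for any two convergent sequences $\omega_n \to \omega$ and $T_{*,n}\omega_n \to \widetilde{\omega}$ in $\racs{s}$, the sequence $T_n \in \PSL$ admits a convergent subsequence in $\PSL$. Because the residues are $\PSL$--invariant and the poles transform by $T_n$, I would read everything off the residues--poles atlas $\mathfrak{A}_{rp}$, where $\mathcal{A}_s$ acts only on the pole configuration $\{p_1,\dots,p_s\}\subset\CC$ while leaving the weights $\{r_\iota\}$ fixed.

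First I would reduce properness of $\mathcal{A}_s$ to properness of the induced $\PSL$--action on the space of pole configurations. Since $f_{rp}$ is a bijection and the $r_\iota$ are invariant, convergence $\omega_n\to\omega$ forces the residue vectors to converge and, after passing to a subsequence and relabelling via $\Sim{s}$, forces the ordered pole tuples $(p_1^{(n)},\dots,p_s^{(n)})$ to converge in $(\CC^s\setminus\Delta)$ to the limit poles of $\omega$, and likewise $(T_n(p_1^{(n)}),\dots,T_n(p_s^{(n)}))$ to the poles of $\widetilde\omega$. Both limit configurations consist of $s$ \emph{distinct} points of $\CC$. Thus the problem becomes: given $T_n\in\PSL$ carrying a convergent configuration of $s\geq 3$ distinct points to another convergent configuration of $s$ distinct points, extract a convergent subsequence of $\{T_n\}$.

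The key step, and the place where the hypothesis $s\geq 3$ is essential, is the rigidity of $\PSL$ on triples: a M\"obius transformation is uniquely determined by its values on three distinct points of $\CC$, and the assignment sending an ordered triple of distinct points together with its image triple to the unique element of $\PSL$ is continuous on the relevant domain. I would fix three of the pole indices, say $\iota=1,2,3$; since $s\geq 3$ these exist and stay pairwise distinct in the limit. Writing $T_n$ as the unique element mapping the source triple $(p_1^{(n)},p_2^{(n)},p_3^{(n)})$ to the target triple $(T_n(p_1^{(n)}),T_n(p_2^{(n)}),T_n(p_3^{(n)}))$, continuity of the cross--ratio normalization gives $T_n \to T_\infty$ in $\PSL$, where $T_\infty$ is the M\"obius map sending the three limit source points to the three limit target points. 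Compactness of $\{T_n\}$ follows, establishing properness of $\mathcal{A}_s$ on $\racs{s}$.

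The main obstacle will be handling poles at $\infty$ uniformly and guaranteeing that the three chosen reference points do not collide \emph{or escape to $\infty$} in a degenerate way; I expect to resolve this by working in the compact sphere $\CC$ throughout, using the spherical metric so that ``configuration of $s$ distinct points'' is a statement about a compact space $(\CC^s\setminus\Delta)/\Sim{s}$, where escape to $\infty$ is simply convergence to a finite point of $\CC$ and causes no trouble. For $s=2$ the argument genuinely fails: two points determine only a one--parameter family of normalizations, leaving a noncompact stabilizer direction, which is exactly why the statement is restricted to $s\geq 3$. Finally, the real analytic case on $\RI{s}$ is immediate: $\RI{s}$ is a $\PSL$--invariant real analytic submanifold by Corollary~\ref{teorema-RI-variedad}, and properness is inherited by restriction of a proper action to an invariant (closed in the subspace topology) submanifold, so no separate argument is required.
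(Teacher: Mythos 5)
Your proof is correct, and its engine is the same as the paper's: a M\"obius transformation is determined uniquely, and continuously, by its values on an ordered triple of distinct points, so once the pole configurations of $\omega_n$ and of $T_{n*}\omega_n$ converge in $\CC$ to configurations of $s\geq 3$ distinct points (after passing to a subsequence on which the matching permutation is constant), the sequence $T_n$ converges in $\PSL$. Where you differ is the packaging of properness. The paper routes through Bourbaki's criterion, proving two separate statements: that the graph map $(T,\omega)\mapsto (T_*\omega,\omega)$ is closed --- by essentially the sequential argument you give --- and that its point preimages are compact, which it obtains from a counting argument: the nonzero residues sum to zero, hence cannot all coincide, so at most $(s-1)!$ permutations of the poles are admissible and each fiber is finite. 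You instead verify the sequential characterization of proper maps directly (legitimate here, since $\PSL$ and $\racs{s}$ are metrizable manifolds); this subsumes both of the paper's steps in a single subsequence extraction and never needs the fiber count. You also make explicit two points the paper leaves implicit: the isochronous case, which you correctly reduce to the fact that a proper action restricts to a proper action on an invariant subset (closedness of $\RI{s}$ is not even needed, since a compact subset of $\RI{s}\times\RI{s}$ has the same preimage under the restricted and the ambient graph maps), and the sharpness of the hypothesis $s\geq 3$, visible in the noncompact isotropy $\PSL_{\omega}\cong\C^*$ of $\omega=\rp{r,-r;p_1,p_2}$ when $s=2$.
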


\begin{proof}
We will show that the map $\tilde{\mathcal{A}}_s : \PSL \times \racs{s}  \longrightarrow  \racs{s} \times \racs{s}$, 
defined as $\tilde{\mathcal{A}}_s(T, \omega) := (T_*\omega, \omega)$, 
is closed and the preimage for all points is a compact set. 
Applying Thm. 1 in \cite[Sec.~\textsection~10.2~p.~101]{bourbaki}, 
the action $\mathcal{A}_s$ is proper.

First, we want to prove that the map $\tilde{\mathcal{A}}_s$ is closed. 
Consider a closed subset $C \subset \PSL \times \racs{s}$ and a convergent sequence $\{(\eta_m, \omega_m )\} \subset \tilde{\mathcal{A}}_s(C)$ with a limit point $(\eta, \omega) \in \racs{s} \times \racs{s}$. 
Since $(\eta_m, \omega_m) \in \tilde{\mathcal{A}}_s(C)$, the sets of resdiues for $\omega_m$ and $\eta_m$ coincide.  
Explicity, for each $m$ we choose $\eta_m =\rp{r'_{m1}, \ldots, r'_{ms}; q_{m1} ,\ldots, q_{ms}}$ and $\omega_m = \rp{r_{m1}, \ldots, r_{ms}; p_{m1} ,\ldots, p_{ms}}$, 
without loss of generality $r'_{m\iota}=r_{m\iota}$; 
here our assertions will work for all $\iota =1, \ldots, s$. 
If $s \geq 3$, then there exists a unique $T_m \in \PSL$ such that $T(p_{m\iota})=q_{m\iota}$. 
Since $C$ is closed and $(\eta, \omega)$ is the limit point of the sequence $\{(\eta_m, \omega_m)\}$, 
say $\eta = \rp{r'_1, \ldots, r'_s; q_1 ,\ldots, q_s}$ and $\omega = \rp{r_1, \ldots, r_s; p_1 ,\ldots, p_s}$; 
it follows that there is a unique limit transformation $T \in \PSL$ with $T(p_{\iota})=q_{\iota}$ and $r'_{\iota} = r_{\iota}$, 
thus, the sequence $\{T_{m}\}$ converges to $T$. 
Therefore $(\eta,\omega) \in \tilde{\mathcal{A}}_s(C)$, and the map $\tilde{\mathcal{A}}_s$ is closed.

Secondly, 
we prove that $\mathcal{A}^{-1}_s(\eta,\omega)$ is a compact set. 
Since there are at most $s!$ permutations of the configuration of poles $\{p_{\iota}\}$ to $\{q_{\iota}\}$, 
a configuration of poles with residues $\rp{r_1, \ldots, r_s; p_1, \ldots, p_s}$ has at least two residues satisfying $r_{\iota} \not= r_j$; 
hence there are at most $(s-1)!$ admissible permutations of $\rp{r_1, \ldots, r_s; p_1, \ldots, p_s}$ to $\rp{r'_1,\ldots, r'_s; q_1, \ldots, q_s}$. 
In fact, $\tilde{\mathcal{A}}_s^{-1}(\eta,\omega)$ is a finite set, 
hence compact in $\PSL \times \racs{s}$. 
\end{proof}

\subsection{Nontrivial isotropy groups}

For $\omega \in \racs{s}$, 
we denote by

\centerline{$\PSL_{\omega} := \{ T \in \PSL \ | \ T_*\omega = \omega \}$, } 

\noindent its isotropy group. 

\noindent A direct computation prove that $\tilde{\mathcal{A}}_s^{-1}(\omega,\omega) = \PSL_{\omega} \times \{\omega \}$ is a finite set when $s \geq 3$; 
see prove of Lemma \ref{proper-action}. 
In fact, 
every $\omega \in \racs{s}$ has finite isotropy group. 
A well--known result of F. Klein \cite[p.~126]{klein20} classifies the finite subgroups of $\PSL$; 
for a modern reference see \cite[Sec.~2.13]{singerman}. 
These finite subgroups are cyclic $\Z_n$, 
dihedral $D_n$ and the rotation groups $G(S)$ of platonic solids $S$; 
$A_4$ for tetrahedron, 
$\Sim{4}$ for octahedron and cube, 
and $A_5$ for dodecahedron and icosahedron. 
A natural question is which finite subgroups of $\PSL$ are realizable as isotropy groups of $\omega \in \RI{s}$? 
The answer is as follows.
\begin{proposition}\label{realizacion-grupos-finitos}
 Every finite subgroup $G < \PSL$ appears as the isotropy group of suitable $\omega \in \RI{s}$.
\end{proposition}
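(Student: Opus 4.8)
The plan is to realize each finite $G < \PSL$ as the \emph{exact} group of symmetries of a suitably weighted configuration of poles, exploiting that the residues are $\PSL$--invariant. Concretely, I would conjugate $G$ into the rotation subgroup $PSU(2) \subset \PSL$ (legitimate by Klein's theory \cite{klein20}), choose the poles $\{p_\iota\}$ to be a union of $G$--orbits on $\CC$, and assign to each orbit a single residue that is constant along the orbit. Constancy forces $G \subseteq \PSL_\omega$, while the residue theorem $\sum r_\iota = 0$ together with the isochronous requirement $r_\iota \in i\R^*$ amounts to one real linear relation among the orbit--weights, always solvable with nonzero entries once at least two orbits are present. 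Since a single orbit would force its common residue to vanish, at least two orbits are always needed, which is exactly why pyramids, bipyramids, and pairs of dual polyhedra (rather than a single orbit) enter the construction.

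For the cyclic and dihedral families I would argue by an explicit computation. For $G = \Z_n$ (generated by $z \mapsto e^{2\pi i/n} z$) take the \emph{pyramid} poles $\{\infty\} \cup \mu_n$, with residue $i$ at each $n$--th root of unity and residue $-ni$ at $\infty$; for $G = D_n$ (adding $z \mapsto 1/z$) take the \emph{bipyramid} poles $\{0,\infty\} \cup \mu_n$, with residue $2i$ on $\mu_n$ and residue $-ni$ at $0$ and at $\infty$. In each case the apexes carry a residue distinct from the base, so any $T \in \PSL_\omega$ must fix $\infty$ (pyramid) or permute $\{0,\infty\}$ (bipyramid), because $\PSL$--invariance forces $T$ to match residues. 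In the pyramid case $T$ is then affine and must fix the centroid $0$ of $\mu_n$, whence $T(z) = \alpha z$ with $\alpha \in \mu_n$; in the bipyramid case $T(z) = \alpha z$ or $T(z) = \alpha/z$ with $\alpha \in \mu_n$. These groups are precisely $\Z_n$ and $D_n$.

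For the polyhedral groups I would take the poles to be the vertices together with the face--centres of the corresponding Platonic solid, one imaginary residue per orbit chosen so the weighted sum vanishes (for instance residues $5i$ and $-3i$ on the $12$ vertices and $20$ faces of the icosahedron, giving $\omega \in \RI{32}$). Then $G$ preserves each orbit, so $G \subseteq \PSL_\omega$, and by Lemma~\ref{proper-action} the isotropy group $\PSL_\omega$ is finite. The decisive input is Klein's classification \cite{klein20}: both $\Sim{4}$ and $A_5$ are \emph{maximal} among the finite subgroups of $\PSL$, so $\Sim{4} \subseteq \PSL_\omega$ (octahedron) and $A_5 \subseteq \PSL_\omega$ (icosahedron) already force equality. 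The tetrahedral group $A_4$ is the only non--maximal case; here I would use the two dual tetrahedra, whose vertices together form the eight vertices of a cube, carrying distinct residues $i$ and $-i$. The Möbius symmetry group of the cube vertices is finite (it embeds in the permutations of the eight points) and contains $\Sim{4}$, hence equals $\Sim{4}$; its subgroup preserving the two--colouring into the two tetrahedra is the index--two subgroup $A_4$, and the distinct residues rule out the colour--swapping coset. Thus $\PSL_\omega = A_4$.

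The main obstacle is exactly this last point: ensuring that no symmetry beyond $G$ survives. The danger is twofold, since a $G$--orbit may acquire Möbius symmetries outside $PSU(2)$, and $G$ may sit inside a strictly larger finite subgroup. Finiteness of $\PSL_\omega$ (Lemma~\ref{proper-action}) together with Klein's list reduces the problem to finitely many candidate overgroups, and the residue data is the tool that eliminates each of them: assigning distinct imaginary residues to any two orbits that an unwanted symmetry would interchange makes that symmetry incompatible with the $\PSL$--invariance of the residues. Finally, every residue used above is purely imaginary, so each constructed $\omega$ lies in $\RI{s}$, which is what the statement demands.
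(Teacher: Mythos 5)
Your proposal is correct and follows essentially the same route as the paper: poles at the vertices of a pyramid for $\Z_n$, of a bipyramid for $D_n$, and of a Platonic solid together with its dual for $A_4$, $\Sim{4}$ and $A_5$, with each orbit carrying a constant purely imaginary residue tuned so that the residues sum to zero. The difference is one of completeness rather than of method: you actually verify that $\PSL_{\omega}$ equals $G$ (residue--matching plus the centroid argument in the cyclic and dihedral cases, maximality of $\Sim{4}$ and $A_5$ in Klein's list, and the two--colouring argument for $A_4$), whereas the paper asserts these equalities without proof.
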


\begin{proof}
Fixing $G < \PSL$ finite subgroup, 
we construct explicitly a rational 1--form $\omega \in \RI{s}$ such that $\PSL_{\omega} \cong G$. 
Consider $\zeta_1, \ldots, \zeta_n$ the $nth$ roots of unity; $n \geq 2$.\\ 
\textit{Case $G=\Z_n$.} Recall that the rotation group of  a pyramid with polygonal base and triangular faces is $\Z_n$, 
where $n$ is the number of sides on the base. 
In particular, 
the set $\{\zeta_1, \ldots, \zeta_n, 0 \}$ in the Riemann sphere are the vertices of a pyramid as above. 
In fact, 
if 
$$
\omega = \rp{\underbrace{i, \ldots, i}_{n}, -ni; \zeta_1, \ldots, \zeta_n, 0} \in \RI{(n+1)},
$$ 
then its isotropy group is $\PSL_{\omega} \cong \Z_n$. \\ 
\textit{Case $G=D_n$.} A bypyramid is a polihedron defined by two pyramids glued together by their basis. 
If all their faces are isosceles triangles, 
then its rotation group is $D_n$ where $n$ are the number of sides in the base for both pyramids. 
In particular, 
the set $\{\zeta_1, \ldots, \zeta_n, 0, \infty \}$ in the Riemann sphere are the vertices of a bypyramid as above. 
In fact, 
if
\vspace{-0.2cm}
$$
\omega=\rp{\underbrace{i,\ldots, i}_{\text{$n$}},-\frac{n}{2}i, -\frac{n}{2}i; \zeta_1,\ldots,\zeta_n, 0, \infty} \in \RI{(n+2)},
$$
then its isotropy group is $\PSL_{\omega} \cong D_n$.\\
\textit{Case $G=G(S)$.} We consider the union of the vertices of a platonic solid $S$ and its dual $S^{*}$ in the Riemann sphere. The suitable 1--form $\omega$ has poles in both sets of vertices. 
The choice of the residues is as follows, 
residue $i$ at the vertices of $S$ and $-ki$ at the vertices of $S^{*}$, where
$$
k := 	\left\{\begin{array}{ll}
		1 & \text{for $S$ the tetrahedron}, \\
     	\vspace{-0.4cm} & \\
		{4}/{3} \hspace{1cm}& \text{for $S$ the cube}, \\
     	\vspace{-0.4cm} & \\
     {3}/{4} & \text{for $S$ the octahedron}, \\
		\vspace{-0.4cm} & \\
		{3}/{5} & \text{for $S$ the dodecahedron}, \\
     	\vspace{-0.4cm} & \\
     {5}/{3} & \text{for $S$ the icosahedron}, \\ 
	\end{array}\right. 
$$
whence the isotropy group $\PSL_{\omega} \cong G(S)$. 
Concrete examples are provided below. 
\end{proof}

\begin{example}
\begin{upshape}
1. For $\epsilon_1, \epsilon_2, \epsilon_3$ roots of $z^{3} + 1 = 0$, the isotropy group of
$$
\begin{array}{ll}
\displaystyle \omega=\rp{\underbrace{i,\ldots, i}_{\text{4}},\underbrace{-i,\ldots, -i}_{\text{4}}; \frac{\sqrt{2}}{2}\zeta_1,  \frac{\sqrt{2}}{2}\zeta_2,  \frac{\sqrt{2}}{2}\zeta_3, \infty, {\sqrt{2}}{\epsilon_1},  {\sqrt{2}}{\epsilon_2}, {\sqrt{2}}{\epsilon_3}, 0 } &\\
&\hspace{-1.2cm} \in \RI{8}
\end{array}
$$
is isomorphic to the rotation $A_4$ group of a tetrahedron.

\noindent 2. For $\epsilon_{1}, \epsilon_2, \epsilon_3, \epsilon_4$ roots of $z^4+1=0$ and $\lambda={(\sqrt{6} - \sqrt{2})}/{2}$, the isotropy group of 
$$
\begin{array}{ll}
 \omega= \left\langle \underbrace{i,\ldots, i}_{\text{8}}, \underbrace{-\frac{4}{3}i, \ldots, -\frac{4}{3}i}_{\text{6}}; \right. & \\
&\displaystyle \hspace{-2.8cm} \left. \lambda, -\lambda, i\lambda, -i\lambda, \frac{1}{\lambda}, -\frac{1}{\lambda}, \frac{i}{\lambda}, -\frac{i}{\lambda}, \epsilon_1, \epsilon_2, \epsilon_3, \epsilon_4, 0, \infty \right\rangle \in \RI{14}
\end{array}
$$
is isomorphic to the rotation group $\Sim{4}$ of a cube (octahedron).
\end{upshape}
\end{example}

 Obviously, the degree $s$ depends on the order of $G$ and the $1$--forms in the above proposition are isochronous. 
A. Solynin \cite{solynin} constructs quadratic differentials on compact Riemann surface $\mathcal{R}$ associated with a weight graph embedded in $\mathcal{R}$. 
He explicitly gives quadratic differentials, with zeros in the vertices of a platonic solid and poles in the center of each face. 
They are different from our 1--forms. 
A. Alvarez--Parrila, M. E. Fr\'ias--Armenta and C. Yee--Romero \cite{alvaro3} classify the isotropy groups of rational 1--forms on the Riemann sphere using the complex atlas by zeros--poles. 

For $2 \leq s \leq 11$, we classify the isotropy groups $\PSL_{\omega}$. These results will help in Section \ref{S-examplesquotient}.

\begin{example}\label{ejemplo-S2}
\begin{upshape}
1. For all $\omega= \rp{r, -r; p_1, p_2} \in \racs{2}$, 
the isotropy group is $\PSL_{\omega} \cong \C^* \cong \{T(z) = az \}$.

\noindent 2. For $\omega \in \racs{3}$, the isotropy group is $\PSL_{\omega}\cong \Z_2$ if and only if $\omega= \rp{r_1, r_1, r_2; p_1, p_2, p_3}$, \textit{i.e.} two residues are equal.
\end{upshape}
\end{example}

\begin{lemma}\label{isotropia-residuos-4}
	Consider $\omega=\rp{r_1,r_2 ,r_3, r_4; p_1, p_2, p_3, p_4} \in \racs{4}$.\\
\noindent 1. If $\omega$ has exactly two equal residues and the cross--ratio\footnote{The cross--ratio is defined as $(p_1, p_2, p_3, p_4) := \frac{(p_4-p_1)(p_3-p_2)}{(p_4-p_2)(p_3-p_1)}.$} 
\begin{itemize}
\item[] $(p_1, p_2, p_3, p_4) \in \{-1, \frac{1}{2}, 2\}$, then $\PSL_{\omega} \cong \Z_2$.
\end{itemize}
\noindent 2. If $\omega$ has two pairs of equal residues and
			\begin{itemize}
				\item[] $(p_1, p_2, p_3, p_4) \not\in \{-1, \frac{1}{2}, 2\}$, then $\PSL_{\omega} \cong \Z_2$,
				\item[] $(p_1, p_2, p_3, p_4) \in \{-1, \frac{1}{2}, 2\}$, then $\PSL_{\omega} \cong \Z_2 \times \Z_2 \cong D_2$.
			\end{itemize}
\noindent 3. If $\omega$ has three equal residues and 
\begin{itemize}
\item[] $(p_1, p_2, p_3, p_4) \in \left\{ (1 \pm i \sqrt{3})/ 2 \right\}$, then $\PSL_{\omega} \cong \Z_3$.
\end{itemize}
\noindent 4. For any other case the isotropy group $\PSL_{\omega}$ is trivial.
\end{lemma}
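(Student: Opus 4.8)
The plan is to reduce the computation of $\PSL_\omega$ to a purely combinatorial problem in $\Sim{4}$ and then intersect two subgroups: one coming from the cross--ratio of the poles, the other from the pattern of coincidences among the residues. Working in the atlas $\mathfrak{A}_{rp}$, the residues--poles description of the action $\mathcal{A}_s$ shows that $T \in \PSL_\omega$ precisely when $T$ carries the configuration $\{p_1,\dots,p_4\}$ to itself by some permutation $\pi$ in such a way that the residue attached to each pole is preserved, i.e. $r_{\pi(j)}=r_j$ for all $j$. Since $s=4\geq 3$, a M\"obius transformation is determined by its values at three points, so each admissible $\pi$ determines at most one $T\in\PSL$; conversely $\pi$ is realized by a M\"obius transformation iff it fixes the cross--ratio value $\lambda:=(p_1,p_2,p_3,p_4)$. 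Hence $\PSL_\omega \cong G_{cr}\cap G_{res}$, where $G_{cr}<\Sim{4}$ is the stabiliser of $\lambda$ and $G_{res}<\Sim{4}$ is the subgroup of residue--preserving permutations. By Lemma \ref{proper-action} both are finite, so this already recovers finiteness and reduces the statement to identifying the two subgroups.

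First I would identify $G_{cr}$ through the anharmonic action of $\Sim{4}/V_4\cong\Sim{3}$ on the six values $\{\lambda,\,1/\lambda,\,1-\lambda,\,1/(1-\lambda),\,\lambda/(\lambda-1),\,(\lambda-1)/\lambda\}$. The Klein four--group $V_4=\{e,(12)(34),(13)(24),(14)(23)\}$ fixes $\lambda$ for every configuration. A direct substitution in the cross--ratio formula of the footnote gives that $(12),(34)$ send $\lambda\mapsto 1/\lambda$, the pair $(13),(24)$ send $\lambda\mapsto 1-\lambda$, and $(14),(23)$ send $\lambda\mapsto\lambda/(\lambda-1)$. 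Consequently, for generic $\lambda$ one has $G_{cr}=V_4$; for the harmonic values the stabiliser jumps to a dihedral group of order $8$, with $\lambda=-1$ adjoining $(12),(34)$, $\lambda=1/2$ adjoining $(13),(24)$, and $\lambda=2$ adjoining $(14),(23)$ (together with the associated $4$--cycles); and for the equianharmonic values $\lambda=(1\pm i\sqrt3)/2$ the $3$--cycles enter, giving $G_{cr}\cong A_4$ of order $12$. The degenerate values $\lambda\in\{0,1,\infty\}$ are excluded because the poles are distinct.

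Next I would read off $G_{res}$ from the partition of $\{1,2,3,4\}$ into classes of equal residues, recalling that $r_1+\cdots+r_4=0$ with all $r_j\neq 0$ forbids all four residues being equal. All residues distinct gives $G_{res}=\{e\}$; one coincident pair $\{a,b\}$ gives $G_{res}=\langle(ab)\rangle$; two coincident pairs $\{a,b\},\{c,d\}$ (which forces residues $r,r,-r,-r$) give $G_{res}=\{e,(ab),(cd),(ab)(cd)\}\cong\Z_2\times\Z_2$; and a coincident triple $\{a,b,c\}$ gives $G_{res}\cong\Sim{3}$. Intersecting with $G_{cr}$ then produces the classification. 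With a single coincident pair, $(ab)$ survives exactly when $\lambda$ is the harmonic value attached to that pair, giving $\PSL_\omega\cong\Z_2$ (item 1). With two coincident pairs, the double transposition $(ab)(cd)\in V_4\subseteq G_{cr}$ survives for every $\lambda$, so $\PSL_\omega$ always contains a $\Z_2$, and it enlarges to $\Z_2\times\Z_2\cong D_2$ exactly when $\lambda$ is the matching harmonic value, which simultaneously adjoins $(ab)$ and $(cd)$ (item 2). With a coincident triple, intersecting $\Sim{3}$ with the equianharmonic stabiliser $A_4$ leaves the alternating subgroup $\langle(abc)\rangle\cong\Z_3$ (item 3); and in the remaining combinations one checks that the intersection reduces to the trivial group (item 4).

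The step demanding genuine care is the interaction between the special cross--ratio values and the residue partition. Because the three harmonic values $-1,1/2,2$ each adjoin a \emph{different} pair of transpositions to $V_4$, the value of $\lambda$ must be compatible with the positions of the coincident residues; the cleanest device is to exploit the relabelling freedom of the unordered configuration to move the coincident pair (resp. triple) into positions $\{1,2\}$ (resp. $\{1,2,3\}$) before evaluating the cross--ratio, which pins down the relevant harmonic value and makes $G_{cr}\cap G_{res}$ unambiguous. The delicate boundary to verify is precisely item 4: one must confirm, for each residue pattern, that no unlisted combination of cross--ratio type and coincidence pattern contributes extra symmetry, the most sensitive instance being a coincident triple combined with a harmonic (rather than equianharmonic) cross--ratio, which sits exactly on the border between a nontrivial and a trivial intersection. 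The only structural input beyond this finite bookkeeping is that a cross--ratio--preserving permutation is induced by a \emph{unique} element of $\PSL$, which rests on $s\geq 3$; granting that, the lemma is entirely a subgroup computation in $\Sim{4}$.
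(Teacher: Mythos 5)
Your reduction $\PSL_{\omega} \cong G_{cr} \cap G_{res}$ is sound, and it is essentially a systematization of what the paper does: the paper's proof likewise matches residue--preserving permutations of the poles against invariance of the cross--ratio, but it writes out only the single case $r_1=r_2$, $r_3 \neq r_4$, $(p_1,p_2,p_3,p_4)=-1$, tabulates the remaining compatible combinations for item 1, and leaves items 2--4 to the reader. Your anharmonic bookkeeping is correct ($V_4$ generically, a dihedral group of order $8$ at a harmonic value, $A_4$ at an equianharmonic value), your identification of $G_{res}$ from the residue partition is correct, and your insistence that the harmonic value be the one \emph{attached} to the coincident pair is exactly the refinement that the paper's table encodes and that the bare statement of the lemma suppresses.

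The genuine gap is at the point you flag as delicate and then defer: a coincident triple together with a harmonic cross--ratio does \emph{not} give a trivial intersection. If the equal residues occupy $\{a,b,c\}$ and $d$ is the remaining index, then $G_{res}$ is the full symmetric group on $\{a,b,c\}$, so it contains the transpositions $(ab)$, $(ac)$, $(bc)$; on the other hand, each harmonic value adjoins to $V_4$ a complementary pair of transpositions $(xy)$, $(zw)$ with $\{x,y,z,w\}=\{1,2,3,4\}$, and exactly one member of that pair avoids $d$ and hence lies in $G_{res}$. Therefore $G_{cr}\cap G_{res} \cong \Z_2$ for \emph{every} harmonic value, not $\{Id\}$. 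Concretely, $\omega=\rp{i,i,i,-3i; 0,1,\tfrac{1}{2},\infty} \in \RI{4}$ has cross--ratio $(0,1,\tfrac{1}{2},\infty)=-1$ and is fixed by $T(z)=1-z$, which exchanges the poles $0$ and $1$, fixes $\tfrac{1}{2}$ and $\infty$, and preserves all residues. So the verification you postponed does not confirm item 4; carried out, it refutes it. Your framework is the right one, but the conclusion you assert for the ``remaining combinations'' is false --- and indeed it shows that item 4 of the lemma as stated (which the paper's proof, stopping after item 1, never examines) fails for three equal residues over a harmonic configuration of poles.
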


\begin{proof}
\noindent Case 1. 
Consider $\omega = \rp{r_1, r_2, r_3, r_4; p_1, p_2, p_3, p_4} \in \racs{4}$ with $r_1=r_2$, $r_3\not=r_4$ and $(p_1, p_2, p_3, p_4)=-1$. 
We can verify that $(p_1, p_2, p_3, p_4)=(p_2, p_1, p_3, p_4)$, 
and there is a nontrivial $T \in \PSL$ such that $T_{*}\omega = \left\langle r_1, r_2, r_3, \right.$ $\left. r_4; p_2, p_1, p_3, p_4 \right\rangle = \omega$. 
In fact, 
$T \in \PSL_{\omega}$. 
Since $r_3\not= r_4$, 
there are no more elements in the isotropy; 
therefore $\PSL_{\omega} \cong \Z_2$. 
The result is analogous for
$$
\begin{array}{cccccc}
r_1 \not=r_2, & r_3=r_4, & \lambda=-1, &  r_1 = r_4, & r_2\not=r_3, & \lambda = 2, \\
r_1 = r_3, & r_2\not=r_4, & \lambda = {1}/{2}, &  r_1 \not= r_4, & r_2=r_3, & \lambda = 2,\\
r_1 \not= r_3, & r_2=r_4, & \lambda = {1}/{2}.& & & \\
\end{array}
$$
We leave the reader to perform the other cases. 
\end{proof}

Obviously, for $s \geq 5$ the specific conditions to determine the isotropy groups are more complicated.

\begin{example}{\label{isotropia-residuos-5}}
\begin{upshape}
	For $\omega \in \racs{5}$, the nontrivial isotropy groups $\PSL_{\omega}$ are isomorphic to $\Z_{2}, \Z_{3}, \Z_{4}$ or $D_3$.

Let us explictly describe it. 
If $\omega=\rp{r_1,\ldots,r_5; p_1,\ldots,p_5}$ has nontrivial isotropy group, 
then there are at least two pairs of equal residues or three equal residues. \\
Case \textit{$r_1 = r_2$ and $r_3=r_4$}. 
Since $r_5$ is different from the other residues, 
the pole $p_5$ is a fixed point in the action of $\PSL_{\omega}$ on $\CC$. 
In fact, 
the isotropy group is cyclic. 
If $r_1 \not= r_3$, 
then $\PSL_{\omega} \cong \Z_2$. 
If $r_1 = r_3$, 
then $\PSL_{\omega} \cong \Z_2, \Z_3, \text{ or } \Z_4$. \\
Case \textit{$r_1=r_2=r_3$}. 
We suppose that $\PSL_{\omega}$ is not isomorphic to a cyclic group. 
Since $\PSL_{\omega}$ is nontrivial, 
$r_4=r_5$, and $\{p_4, p_5 \}$ is a orbit of order 2 in the action of $\PSL_{\omega}$ on $\CC$. 
In fact, the isotropy group is dihedral. 
Since $r_1=r_2=r_3$, 
the isotropy group $\PSL_{\omega} \cong D_3$.
\end{upshape}
\end{example}


Numerical conditions on $s \geq 3$, 
to realize $G < \PSL$ as an isotropy group for some $\omega \in \racs{s}$, 
are as follow. 
 
\begin{proposition}
\label{prop-numerica}
Consider $n \geq 2$ and $n_1, n_2 \in \N\cup\{ 0 \}$ such that $n_1 + n_2 \geq 2$. 
There exists $\omega \in \racs{s}$ such that
\begin{enumerate}
\item $\PSL_{\omega} \cong \Z_n$ if and only if $s \equiv 0, 1 \text{ or } 2 \ (\text{mod } n)$, where $s > n$. 

\item $\PSL_{\omega} \cong D_n$ if and only if $s \equiv 0 \text{ or } 2 \ (\text{mod } n)$, where $s > n$. 

\item $\PSL_{\omega} \cong A_4$ if and only if $s = 12n_1 + n_2$, where $n_2 \in \{ 0, 8, 10, 14, 16, 18 \}$.

\item $\PSL_{\omega} \cong \Sim{4}$ if and only if $s = 24n_1 + n_2$, where $n_2 \in \left\{ 0, 14, 18, 20, 26, \right.$ $\left. 30,32, 36 \right\}$.

\item $\PSL_{\omega} \cong A_5$ if and only if $s = 60n_1 + n_2$, where $n_2 \in \left\{ 0, 32, 42, 50, 62,  \right.$ $\left. 72, 80, 90 \right\}$.
\end{enumerate}
\end{proposition}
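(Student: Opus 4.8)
The plan is to reduce the realization problem to a purely combinatorial question about the sizes of the orbits of $G$ acting on $\CC$, subject to the residue theorem. The structural fact I would record first is a dictionary: a transformation $T \in \PSL$ lies in $\PSL_{\omega}$ if and only if $T$ permutes the poles $\{p_1, \ldots, p_s\}$ while preserving the residue attached to each pole. Hence $\PSL_{\omega} \cong G$ forces the pole set to be a union of $G$--orbits on $\CC$ on which the residue function is constant along each orbit, and the residue theorem $\sum_{\iota} r_{\iota} = 0$ must hold. Conversely, given such a $G$--invariant weighted configuration whose full symmetry group is exactly $G$, the action $\mathcal{A}_s$ together with the residue--poles atlas $\mathfrak{A}_{rp}$ produces an $\omega \in \racs{s}$ (indeed in $\RI{s}$, taking residues in $i\R^*$) with $\PSL_{\omega} \cong G$. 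So the whole argument is an analysis of which $s$ admit such a configuration, case by case along Klein's list.

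Second, I would list the orbit sizes of each finite $G < \PSL$ acting on the sphere: for $\Z_n$, two fixed points (size $1$) and generic orbits of size $n$; for $D_n$, one orbit of size $2$, special equatorial orbits of size $n$, and generic orbits of size $2n$; for $A_4$, two orbits of size $4$, one of size $6$, and generic orbits of size $12$; for $\Sim{4}$, one orbit each of sizes $6$, $8$, $12$ and generic orbits of size $24$; for $A_5$, one orbit each of sizes $12$, $20$, $30$ and generic orbits of size $60$. Then $s$ is realizable with isotropy inside a conjugate of $G$ precisely when $s$ is a sum of these orbit sizes, each special orbit used at most its available multiplicity and generic orbits used freely. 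Reading such a sum modulo $|G|$ (resp.\ modulo $n$) produces exactly the stated congruence classes and the residue sets $n_2$; this is the bookkeeping step.

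Third, and this is where the genuine content lies, I would impose the two constraints that the naive orbit count misses. (i) The residue theorem forbids a single orbit, since one orbit would force its common residue to be $0$, which is excluded; every realization therefore uses at least two orbits with independent residues. This is what deletes the pure single--orbit values: it forces $s > n$ in the cyclic and dihedral cases, and it forces the "$n_2 = 0$" classes $s \equiv 0 \ (\mathrm{mod}\ |G|)$ to start at $s = 2|G|$, i.e.\ to require two generic orbits. (ii) I must arrange that the isotropy is \emph{exactly} $G$ and not a larger finite group containing it. The remedy is to choose residues breaking every accidental symmetry: in the cyclic case a single fixed point, or asymmetric residues, destroys any flip; in the platonic cases the decisive point is that although the union of a solid and its dual can be invariant under a larger group (e.g.\ the $4+4$ vertices of a tetrahedron and its dual form the cube, invariant under $\Sim{4}$), assigning residue $i$ to one orbit and a different purely imaginary value to the other kills precisely the symmetries outside $G$, leaving $\PSL_{\omega} \cong G$. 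Verifying (ii) in each case, and checking that the prescribed residues can be taken purely imaginary so that the witnesses lie in $\RI{s}$, is the main obstacle; the congruence arithmetic is then routine.

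Finally, for the converse (necessity) I would argue directly from the dictionary: if $\PSL_{\omega} \cong G$ then the poles split into $G$--orbits, so $s$ is a sum of admissible orbit sizes and the congruences follow; and since at least two orbits are needed by the residue theorem, while a size--$n$ (resp.\ generic) orbit is needed to cut the isotropy down from a positive--dimensional subgroup such as $\{T(z)=az\}$, the stated lower thresholds ($s > n$, and $s \ge 2|G|$ in the $\equiv 0$ classes) are forced. Assembling the five cases then completes the proof, with the platonic examples already recorded in Proposition~\ref{realizacion-grupos-finitos} serving as the base configurations.
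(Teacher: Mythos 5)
Your proposal follows essentially the same route as the paper's proof: conjugate the isotropy group to Klein's standard model, decompose the pole set of $\omega$ into orbits of that group acting on $\CC$, and read the admissible values of $s$ off the orbit sizes. If anything, your write-up is more explicit than the paper (which treats only the cyclic case in detail and declares the remaining cases analogous) about the two constraints that produce the thresholds $s > n$ and $n_1 + n_2 \geq 2$, namely the residue theorem excluding single-orbit configurations and the choice of residues needed to make the isotropy exactly $G$ rather than a larger group.
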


\begin{proof}
Case 1. 
Consider $\eta = \rp{-ni, i, \ldots, i; \infty, \zeta_1, \ldots, \zeta_n}$, 
where $\zeta_{\iota}$ are the $nth$ roots of unity. 
Obviously, its isotropy group $\PSL_{\eta} = \{e^{2k\pi i / n}z\}\cong \Z_n$. 
For $\omega \in \racs{s}$, 
if $\PSL_{\omega} \cong \Z_n$ then there is $T \in \PSL$ such that $\PSL_{T_*\omega} = T \cdot \PSL_{\omega} \cdot T^{-1} = \PSL_{\eta}$; 
see \cite[p.~107]{diustermaat} and \cite[p.~44]{singerman}. 
It is easy to see that for $p_{\iota} \in \CC$ pole of $T_{*}\omega$, 
its orbit $\PSL_{T_*\omega} \cdot p_{\iota}$, 
under the action of $\PSL_{T_*\omega}$ on $\CC$, 
is a set of poles for $T_*\omega$. 
In other words, 
if $\ell$ is the number of poles $p_{\iota}$ with different orbits, 
then 
$$
s = \# \{\text{poles of }\omega\} = \# \{\text{poles of }T_*\omega\} = \sum_{\iota =1}^{\ell} \#(\PSL_{T_*\omega} \cdot p_{\iota}).
$$
Since $\#(\PSL_{T_*\omega} \cdot p_{\iota}) = n$, for $p_{\iota} \not= 0$ or $\infty$ and $\#(\PSL_{T_*\omega} \cdot 0) = \#(\PSL_{T_*\omega} \cdot \infty ) = 1$, 
the result is proved. 
The other cases are analogous. 
\end{proof}
Using Proposition \ref{prop-numerica}, 
we complete the Table \ref{tab:1}.

\begin{table}[h]
\caption{Finite subgroups of $\PSL$ that appear as isotropy for $\omega \in \racs{s}$.}
\label{tab:1}       
\begin{tabular}{ll}
\hline\noalign{\smallskip}
$s$ & Nontrivial isotropy groups for $\omega \in \racs{s}$  \\
\noalign{\smallskip}\hline\noalign{\smallskip}
3 &  $\Z_2$ \\ 
4 &  $\Z_2,  \Z_3,  \Z_2\times \Z_2$ \\
5 &  $\Z_2,  \Z_3,  \Z_4,  D_3$ \\ 
6 &  $\Z_2, \Z_3, \Z_4, \Z_5, \Z_2 \times \Z_2, D_3, D_4$ \\ 
7 & $\Z_2, \Z_3, \Z_5, \Z_6, D_5$ \\
8 &  $\Z_2, \Z_3, \Z_4, \Z_6, \Z_7, \Z_2 \times \Z_2, D_3, D_4, D_6, A_4$ \\ 
9 &  $\Z_2, \Z_3, \Z_4, \Z_7, \Z_8, D_3, D_7$ \\ 
10 & $\Z_2, \Z_3, \Z_4, \Z_5, \Z_8, \Z_9, \Z_2 \times \Z_2, D_4, D_5, D_8$ \\ 
11 & $\Z_2, \Z_3, \Z_5, \Z_9, \Z_{10}, D_3, D_9$ \\ 
\noalign{\smallskip}\hline
\end{tabular}
\end{table}

\section{Quotients}
\label{S-examplesquotient}

\subsection{Stratification by orbit types}

For $s\geq 3$, 
the $\PSL$--action is proper, 
and the classical theory of Lie groups can be applied. 
Mainly, we follow the theory and notation of J. J. Duistermaat and J. A. Kolk in \cite{diustermaat}. 
In order to describe the quotients $\racs{s}/\PSL$ and $\RI{s}/\PSL$, 
recall that if a Lie group $G$ acts properly on a manifold $M$, 
then every closed subgroup $H$ of $G$ acts in a proper and free way on $G$; 
see \cite[p.~93]{diustermaat}. 
Moreover, 
the right coset $G/G_x$ is a manifold of dimension $dim(G) - dim(G_x)$ diffeomorphic to the orbit $G\cdot x$. \\
In our case, 
since the $\PSL$--action is proper and all isotropy groups are finites, 
the orbits $\PSL \cdot \omega$ under $\mathcal{A}_s$ are 3--dimensional complex submanifolds of $\racs{s}$, 
biholomorphic to the right coset ${\PSL}/{\ \PSL_{\omega}}$. 
Similarly, 
for $\omega \in \RI{s}$ its orbit $\PSL \cdot \omega$ is a 6--dimensional real analytic submanifold of $\RI{s}$, 
and $\PSL \cdot \omega$ is diffeomorphic to the right coset ${\PSL}/{\ \PSL_{\omega}}$.   \\
Furthermore, 
by applying Theorem 2.7.4 in \cite{diustermaat} the quotients $\racs{s}/\PSL$ and ${\RI{s}}/{\PSL}$ admit a stratification by orbit types. 
The action $\mathcal{A}_s$ is proper and free in the generic open and dense subset
\begin{equation}\label{generic-forms}
\mathcal{G}(-1^s) := \{\omega \in \racs{s} \ | \ \PSL_{\omega} \cong \{Id \} \}.
\end{equation}
\noindent The quotient $\mathcal{E}(-1^s) := {\mathcal{G}(-1^s)}/{\PSL}$ is a $(2s-4)$--dimensional complex manifold. 
By following \cite[p.~107]{diustermaat}, 
for each $\omega \in \racs{s}$ its orbit type is
$$
\racs{s}^{\sim}_{\omega} := \left\{ \eta \in \racs{s} \ | \PSL_{\eta} \cong \PSL_{\omega} \right\}.
$$
Note that in our case the isotropy groups are isomorphic in $\racs{s}^{\sim}_{\omega}$ instead of conjugates since for finites subgroups of $\PSL$, they are equivalents; see \cite[p.~50]{singerman}.
Similarly, its orbit type on the quotient is $\racs{s}^{\sim}_{\omega}/\PSL$. 
Looking at $\racs{s}^{\sim}_{\omega}$, its connected components $\{E_j\}$ are the stratum and they are complex submanifolds of $\racs{s}$ with dimension $dim(E_j) \leq 2s-1$. 
The higher dimensional stratum is $\mathcal{G}(-1^s)$. 
For the quotient, 
the connected components of $\racs{s}^{\sim}_{\omega} / \PSL $ are the stratum and they are complex manifolds with dimension less or equal to $2s-4$. 
The higher dimensional stratum is $\mathcal{E}(-1^s)$.  

\begin{remark}
There exists a holomorphic principal $\PSL$--bundle
$$
\xymatrix{\PSL \ar@{->}[r] &  \mathcal{G}(-1^s) \ar@{->}^-{\pi_s}[d] \\
                                        &  \mathcal{E}(-1^s) \ ,}
$$
where $\pi_s$ denotes the natural projection to the $\PSL$--orbits. 
\end{remark}

For $\RI{s}$, the generic open and dense real analytic submanifold is
$$
\mathcal{RIG}(-1^s) :=  \{\omega \in \RI{s} \ | \ \PSL_{\omega} \cong \{Id \} \}.
$$
The quotient $\mathcal{RIE}(-1^s) := \mathcal{RIG}(-1^s)/\PSL$ is a $(3s-7)$--dimensional real analytic manifold. 
For $\RI{s}$ and $\RI{s}/\PSL$, their stratification by orbit types are analogous as for $\racs{s}$ and $\racs{s}/\PSL$, respectively. \\

\subsection{Realizations}

Let us define a realization\footnote{We use definition of realization as in \cite[p.~6]{yoshida}.} for the quotient $\racs{s}/\PSL$ by using a complete set of $\PSL$--invariant functions. 
First, we consider the ordered set of residues as the complement of an arrangement of $s$ hyperplanes
$$
\mathbb{A}_s = \C_{(r_1, \ldots, r_{s-1})}^{s-1} \setminus \left\{r_1+\ldots + r_{s-1}=0, \ r_{\iota}=0 \ \iota=1,\ldots, s-1 \right\}.
$$
For $s=2,3$, 
the residues are a complete set of $\PSL$--invariant functions.

\begin{example}
\begin{upshape}
Case $s=2$, 
the natural projection 

\centerline{$\pi_2 : \racs{2} \longrightarrow {\racs{2}}/{\PSL}: \ \rp{r_1, r_2; p_1, p_2} \longmapsto r_1$}

\noindent determines a fiber bundle. 
Obviously, the base space is biholomorphic to $\C^*/\Z_2$. 
Similarly, the quotient $\RI{2}/\PSL$ is diffeomorphic to $\R^{+} = \{r_1 \ | \ r_1 > 0 \}$. 
For both cases, the fibers are $\CC^2 \setminus \Delta = \{(p_1, p_2) \ | \ p_1 \not= p_2 \}$. 
\end{upshape}
\end{example}

\begin{example}\label{cociente-3}
\begin{upshape}
	Case $s=3$, using Example \ref{ejemplo-S2}.2 the quotient $\racs{3}/\PSL$ admits a stratification with two orbit types. 
It is homemorphic to $\mathbb{A}_3/\Sim{3}$, where the symetric group $\Sim{3}$ acts linearly on $\mathbb{A}_3$ using the isomorphism 
$$
\Sim{3} \cong \rp{\left(\begin{smallmatrix}
                   0 & 1 \\
                   1 & 0
                  \end{smallmatrix}\right), \left(\begin{smallmatrix}
     \ \ 1 & \ \ 0 \\
       -1 & -1 
      \end{smallmatrix}\right)}.
$$
Similarly, the quotient ${\RI{3}}/{\PSL}$ has two connected components. 
A fundamental domain is
$$
\{ (r_1, r_2) \ | \ r_1r_2 >0  \text{ and } r_1 \leq r_2 \} \subset \mathbb{A}_3.
$$
Here $(r_1,r_2)$ determines the 1--form $\rp{ir_1, ir_2, -i(r_1+r_2); 0, \infty, 1}$. 
Their connected components come from $\{r_1 > 0\}$ and $\{r_1 < 0\}$. 
The orbit types are $\{r_1=r_2\}$ and $\{r_1 < r_2\}$. 
Since the connected components are contractibles, 
the corresponding principal $\PSL$--bundle $\pi_3: \mathcal{RIG}(-3) \longrightarrow \mathcal{RIE}(-3)$ is trivial.
\end{upshape}
\end{example}

For $s \geq 4$, the residues are not a complete set of $\PSL$--invariant functions. 
In order to enlarge our set, 
we fix three poles in $\{0, \infty, 1\}$ and consider the ordered set of poles with 
$$
\left[ \C^* \setminus \{ 1 \}\right]^{s-3}\setminus \Delta := \left. \left\{(p_4, \ldots, p_s) \in \left[ \C^* \setminus \{ 1 \}\right]^{s-3} \ \right| \ p_{\iota} \not= p_{\kappa} \text{ for } \iota \not= \kappa \right\} . 
$$

Given a configuration $\{ q_1, \ldots, q_s \} \subset \CC$,  
there exist $\left(\begin{smallmatrix} s \\  s-3 \end{smallmatrix}\right)3!$ M\"obius transformations $T \in \PSL$ such that $\{T(q_1), \ldots, T(q_s) \} = \{0, \infty, 1, p_4, \ldots, p_s \}$.

\begin{remark}
For an ordered collection

\centerline{$(r_1, \ldots, r_{s-1},p_4, \ldots, p_s) \in \mathbb{A}_s \times \left[ \C^* \setminus \{ 1 \}\right]^{s-3}\setminus \Delta : = \mathcal{M}(-s)$,}

\noindent and each permutation $\sigma \in \Sim{s}$, there exists a unique $T_{\sigma} \in \PSL$ that 
$$
\begin{array}{rl}
(r_1, \ldots, r_s, p_4, \ldots, p_s)  & \longmapsto \\
&\vspace{-0.2cm}\\
&\hspace{-2cm} \left\{(r_{\sigma(1)}, 0), (r_{\sigma(2)}, \infty), (r_{\sigma(3)}, 1), (r_{\sigma(4)}, T_{\sigma}(p_4)), \ldots, (r_{\sigma(s)}, T_{\sigma}(p_s))\right\} \\
&\vspace{-0.2cm}\\
&\hspace{-2cm} := \rp{\rp{ r_1, \ldots, r_s; 0, \infty, 1, p_4, \ldots, p_s }} \in \racs{s}/\PSL.
\end{array}
$$ 
\end{remark}

Note the appearance of $r_s= -(r_1+ \ldots + r_{s-1})$ and $0, \infty, 1$ on the right side. 
There is a natural $\Sim{s}$--action on $\mathcal{M}(-s)$. 
In order to recognize it, 
we define a group representation in the Coexeter generators of $\Sim{s}$, 
see \cite[Sec.~1.2]{coxeter}, 
as
$$
\begin{array}{rcl}
\rho_s : \Sim{s} & \longrightarrow & GL_{s-1}( \Z) \times Bir(\CC^{s-3}) \\
&&\vspace{-0.2cm}\\
\sigma_j = (j \ j+1) & \longmapsto & {(A_j, f_j) = \left\{ \begin{array}{ll}
                          \left(A_1, \left( \frac{1}{z_4}, \ldots, \frac{1}{z_s} \right) \right) &\\
&\vspace{-0.2cm}\\
 \left(A_2, \left( \frac{z_4}{z_4-1}, \ldots, \frac{z_s}{z_s-1} \right) \right) &\\
& \vspace{-0.2cm}\\
 \left(A_3, \left( \frac{1}{z_4}, \frac{z_5}{z_4}, \ldots, \frac{z_s}{z_4} \right) \right) &\\
& \vspace{-0.2cm}\\
 \left(A_{j}, \left( z_{\sigma_{\iota}(4)}, \ldots, z_{\sigma_{j}(s)}\right) \right),&\\
&\hspace{-1.2cm} \text{where } j = 4, \ldots, s-1.
                                  \end{array} \right. }
\end{array}
$$
Here $Bir(\CC^{s-3})$ denotes the group of complex birational maps on $\CC^{s-3}$; 
the birational map $f_1$ from $\sigma_1$ must be understood as $f_{1}: (z_4, \ldots, z_s) \longmapsto  ( {1}/{z_4}, \ldots, {1}/{z_s} )$. 
Since there is a biholomorphism between the Torelli space of the $s$--punctured sphere and $[\C^* \setminus\{1 \}]^{s-3} \setminus \Delta$, 
the subgroup of birational maps $\{ f_{\sigma} \}$ is the corresponding Torelli modular group; 
see \cite{patterson}. \\
For $j=1, \ldots, s-2$, the matrices $A_j$ come from the identity matrix by exchanging the $j\text{th}$--row with the $(j+1)\text{th}$--row; 
for $j=s-1$, $A_{s-1}$ results from replacing, 
in the identity matrix, 
the $(s-1){\text{th}}$--row with $(-1, \ldots, -1)$. \\
It is a straighforward computation that $\{\rho_s(\sigma_{j})\}$ satisfy the relations in Coxeter's presentation. 
By using $\rho_s$, we define a $\Sim{s}$--action on $\mathcal{M}(-s)$ as
\begin{equation}\label{S-action}
\begin{array}{rcl}
\Sim{s} \times \mathcal{M}(-s) & \longrightarrow & \mathcal{M}(-s)\\
(\sigma, (r_1, \ldots, r_{s-1}, p_4, \ldots, p_s)) & \longmapsto & \left({A_{\sigma}}\left(\begin{smallmatrix}
                   r_1 \\
                    \vdots \\
                   r_{s-1}
                  \end{smallmatrix}\right),\  f_{\sigma}(p_4, \ldots, p_s)\right).
\end{array}
\end{equation}
In order to recognize the quotient $\racs{s}/\PSL$, the map
$$
\begin{array}{rcl}
\mu_s: \mathcal{M}(-s) & \longrightarrow & \racs{s}\\
(r_1, \ldots, r_{s-1}, p_4, \ldots, p_s) & \longmapsto & \rp{r_1, \ldots, r_{s}; 0, \infty, 1, p_4, \ldots, p_s} = \omega,
\end{array}
$$
will be useful. 
The number of preimages $ \mu_s^{-1}(\omega)$ is $(s-3)!$ 
Furthermore, 
the number of preimages $(\pi_s \circ \mu_s)^{-1}\rp{\rp{\omega}}$ is less than or equal to $s!$ and the equality is fulfilled when $\omega \in \mathcal{G}(-1^s)$; 
recall \eqref{generic-forms}.

\begin{proposition}\label{quotient}
	For $s\geq 4$, the realization of the quotient $\racs{s}/\PSL$ is $\mathcal{M}(-s)/\Sim{s}$.
\end{proposition}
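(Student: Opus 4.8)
The plan is to show that the composite $\pi_s \circ \mu_s \colon \mathcal{M}(-s) \to \racs{s}/\PSL$ is a surjection whose fibres are exactly the $\Sim{s}$--orbits under the action \eqref{S-action}, and that it is an open map; the universal property of the quotient then yields the desired homeomorphism $\bar{\mu}\colon \mathcal{M}(-s)/\Sim{s} \xrightarrow{\ \sim\ } \racs{s}/\PSL$. I would organise the argument in four steps: (i) $\Sim{s}$--invariance of $\pi_s\circ\mu_s$, (ii) surjectivity, (iii) identification of the fibres with the $\Sim{s}$--orbits, and (iv) openness.

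For (i), the key point is an equivariance identity: for every Coxeter generator $\sigma_j$ and every $x = (r_1,\ldots,r_{s-1},p_4,\ldots,p_s)\in\mathcal{M}(-s)$, the forms $\mu_s(\sigma_j\cdot x)$ and $\mu_s(x)$ lie in the same $\PSL$--orbit. Concretely, the matrix $A_j$ permutes the residues while the birational map $f_j$ records the effect on $p_4,\ldots,p_s$ of the unique $T_{\sigma_j}\in\PSL$ that restores the normalisation $\{0,\infty,1\}$ after the transposition; for instance $\sigma_1$ corresponds to $T(z)=1/z$ swapping the poles at $0$ and $\infty$, whence $p_\iota\mapsto 1/p_\iota = f_1(p)_\iota$, while $\sigma_3$ corresponds to the transformation fixing $\infty$ and swapping $0,1$. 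Since the residues are $\PSL$--invariant (see the Remark following \eqref{PSL-action}), one checks directly that $\mu_s(\sigma_j\cdot x)=(T_{\sigma_j})_*\mu_s(x)$ on each generator; because the $\rho_s(\sigma_j)$ satisfy the Coxeter relations, this extends to all $\sigma\in\Sim{s}$. Hence $\pi_s\circ\mu_s$ is constant on $\Sim{s}$--orbits and descends to a continuous map $\bar\mu$.

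For (ii) and (iii), surjectivity is immediate: given $\omega\in\racs{s}$ with $s\geq 4\geq 3$, choosing any three of its poles and sending them to $0,\infty,1$ by the unique Möbius transformation puts $\omega$ in the image of $\mu_s$, so $\pi_s\circ\mu_s$ is onto. For the fibres, recall that $\mu_s$ is $(s-3)!$--to--one and that its fibre over a normalised $\omega$ is obtained by relabelling the pairs $(r_\iota,p_\iota)$ with $\iota\geq 4$; this fibre is precisely a single orbit of the subgroup $\Sim{s-3}<\Sim{s}$ generated by $\sigma_4,\ldots,\sigma_{s-1}$. Conversely, if $\mu_s(x)$ and $\mu_s(y)$ are $\PSL$--equivalent, a transformation $S$ carrying one to the other must match pole--residue pairs and in particular send $\{0,\infty,1\}$ to three poles of $\mu_s(y)$; this matching defines a permutation $\sigma$ with $S=T_\sigma$, and the identity of step (i) then forces $x$ and $\sigma^{-1}\!\cdot y$ to have the same $\mu_s$--image, hence to differ by an element of $\Sim{s-3}$. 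Thus the fibres of $\pi_s\circ\mu_s$ are exactly the $\Sim{s}$--orbits, and the generic count $s!$ recorded before the statement confirms that the action is free on $\mu_s^{-1}(\mathcal{G}(-1^s))$; consequently $\bar\mu$ is a bijection.

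The main obstacle is step (iv): upgrading the continuous bijection $\bar\mu$ to a homeomorphism, i.e. checking that the two quotient topologies agree. Since $\Sim{s}$ is finite, the orbit map $\mathcal{M}(-s)\to\mathcal{M}(-s)/\Sim{s}$ is open, so it suffices to prove that $\pi_s\circ\mu_s$ is open. I would deduce this from the properness of $\mathcal{A}_s$ (Lemma \ref{proper-action}): the three--pole normalisation realises $\mathrm{Im}(\mu_s)$ as a slice transverse to the $\PSL$--orbits, because fixing three poles exhausts the three complex dimensions of $\PSL$, so the action map $\PSL\times\mathrm{Im}(\mu_s)\to\racs{s}$ is open and therefore $\pi_s(\mu_s(U))=\pi_s\bigl(\PSL\cdot\mu_s(U)\bigr)$ is open for every open $U$. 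With $\pi_s\circ\mu_s$ open, $\bar\mu$ is an open continuous bijection, hence a homeomorphism; finally, because $\bar\mu$ intertwines the orbit--type decompositions, it respects the stratification by orbit types described above, giving the realisation $\racs{s}/\PSL\cong\mathcal{M}(-s)/\Sim{s}$.
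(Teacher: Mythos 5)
Your proof is correct and takes essentially the same approach as the paper: the paper's own proof likewise establishes the $\Sim{s}$--invariance of $\pi_s \circ \mu_s$ (via the explicit $\sigma_1$ computation) and surjectivity, and then asserts the descended homeomorphism $\tilde{\mu}_s$, so your steps (iii) and (iv) simply supply the fibre identification and openness details that the paper leaves implicit. One minor slip in an illustrative aside: $\sigma_3$ corresponds to $T(z) = z/p_4$, which fixes $0$ and $\infty$ and sends $p_4 \mapsto 1$ (consistent with $f_3 = \left( 1/z_4, z_5/z_4, \ldots, z_s/z_4 \right)$), not to a transformation fixing $\infty$ and swapping $0$ and $1$.
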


\begin{proof}
Let us prove that $\pi_s \circ \mu_s$ is a $\Sim{s}$--equivariant map, \textit{i. e.} 

\centerline{$(\pi_s \circ \mu_s)(\sigma \cdot (r_1, \ldots, r_{s-1}, p_4, \ldots, p_s)) =  (\pi_s \circ \mu_s)(r_1, \ldots, r_{s-1}, p_4, \ldots, p_s)$,}

\noindent for all $\sigma \in \Sim{s}$. 
For example, consider $\sigma_{1} = (1 \ 2) \in \Sim{s}$, the explicit calculation is
$$
\begin{array}{rl}
(\pi_s \circ \mu_s)(\sigma_1 \cdot (r_1, \ldots, r_{s-1}, p_4, \ldots, p_s)) &\\
&\hspace{-1.7cm}= (\pi_s \circ \mu_s)(r_2, r_1, r_3, \ldots, r_{s-1}, 1/p_4, \ldots, 1/p_s)\\
&\hspace{-1.7cm}= \rp{\rp{ r_2, r_1, r_3, \ldots, r_s;0, \infty, 1, 1/p_4, \ldots, 1/p_s }} \\
&\hspace{-1.7cm}= \rp{\rp{(1/z)_*\rp{r_2, r_1, r_3, \ldots, r_s; \infty, 0, 1, p_4, \ldots, p_s} }}\\
&\hspace{-1.7cm}= \rp{\rp{r_1, \ldots, r_s; 0, \infty, 1, p_4, \ldots, p_s }}\\
&\hspace{-1.7cm}= (\pi_s \circ \mu_s)(r_1, \ldots, r_{s-1}, p_4, \ldots, p_s).
\end{array}
$$
On the other hand, 
$(\pi_s \circ \mu_s)$ is surjective. 
Therefore, 
there exists a homeomorphism $\tilde{\mu}_s: \mathcal{M}(-s)/\Sim{s} \longrightarrow \racs{s}/\PSL$ such that the diagram below conmutes. 
$$
\xymatrix{& \mathcal{M}(-s) \ar@{->}[d] \ar@{->}^-{\pi_s \circ \mu_s}[d] \ar@{->}[dl]\\
    \frac{\mathcal{M}(-s)}{\Sim{s}}  \ar@{-->}_-{\tilde{\mu}_s}[r]&   \frac{\racs{s}}{\PSL}.}
$$

\end{proof}

Similarly, we define $\Im{\mathbb{A}_s} := \{(ir_1, \ldots, ir_{s-1}) \in \mathbb{A}_s \ | \ r_{\iota} \in \R^*, \ \iota=1,\ldots, s-1 \}$. 
Since the $\Sim{s}$--action \eqref{S-action} is well--defined on $\Im{\mathcal{M}(-s)} := \Im{\mathbb{A}_s} \times [\C^*\setminus \{1\}]^{s-3} \setminus \Delta$, 
The result below was proved.
\begin{corollary}\label{quotient2}
	For $s\geq 4$, the realization of the quotient $\RI{s}/\PSL$ is $\Im{\mathcal{M}(-s)}/\Sim{s}$.
\end{corollary}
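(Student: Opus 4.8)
The plan is to follow the proof of Proposition \ref{quotient} almost verbatim, the single new point being that the $\Sim{s}$--action \eqref{S-action} restricts to $\Im{\mathcal{M}(-s)}$. So the first thing I would verify is this restriction. Recall that \eqref{S-action} splits as a linear action through $A_\sigma$ on the residue coordinates $(r_1, \ldots, r_{s-1})$ together with the birational action $f_\sigma$ on the pole coordinates $(p_4, \ldots, p_s)$. The pole factor is untouched: $f_\sigma$ maps $[\C^*\setminus\{1\}]^{s-3}\setminus\Delta$ into itself exactly as in Proposition \ref{quotient}. For the residue factor, every $A_\sigma$ produced by $\rho_s$ lies in $GL_{s-1}(\Z)$, hence is a real matrix and commutes with scalar multiplication by $i$; it therefore carries the purely imaginary locus to itself. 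Because $A_\sigma$ is invertible over $\Z$ and $\mathbb{A}_s$ already excludes the vanishing of each $r_\iota$ and of $r_1 + \ldots + r_{s-1}$, the image of a point of $\Im{\mathbb{A}_s}$ again has all its coordinates in $i\R^*$; thus $A_\sigma(\Im{\mathbb{A}_s}) = \Im{\mathbb{A}_s}$ and \eqref{S-action} descends to $\Im{\mathcal{M}(-s)}$.

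Next I would restrict $\pi_s \circ \mu_s$ to $\Im{\mathcal{M}(-s)}$. Since $\mu_s(ir_1, \ldots, ir_{s-1}, p_4, \ldots, p_s)$ has purely imaginary residues it is isochronous, so the restricted map takes values in $\RI{s}/\PSL$. Its $\Sim{s}$--equivariance is the computation already carried out in Proposition \ref{quotient}, checked generator by generator on the Coxeter generators $\sigma_j$: the identity $(\pi_s \circ \mu_s)(\sigma_j \cdot u) = (\pi_s \circ \mu_s)(u)$ rests only on the $\PSL$--invariance of the residues and on the normalizing transformation fixing the marked triple $\{0,\infty,1\}$, neither of which is affected by restricting the residues to $i\R^*$.

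Surjectivity onto $\RI{s}/\PSL$ is the remaining step. Given $\omega \in \RI{s}$, transitivity of $\PSL$ on ordered triples of distinct points furnishes $T \in \PSL$ sending three poles to $0, \infty, 1$; then $T_*\omega = \rp{r_1, \ldots, r_s; 0, \infty, 1, p_4, \ldots, p_s}$, and the $r_\iota$ stay purely imaginary because residues are $\PSL$--invariant. Hence $(r_1, \ldots, r_{s-1}, p_4, \ldots, p_s)$ lies in $\Im{\mathcal{M}(-s)}$ and maps to $\rp{\rp{\omega}}$. As in Proposition \ref{quotient}, the equivariant surjection $\pi_s \circ \mu_s$ then descends to a continuous bijection $\tilde{\mu}_s : \Im{\mathcal{M}(-s)}/\Sim{s} \longrightarrow \RI{s}/\PSL$, which is a homeomorphism because the quotient projection for the proper $\PSL$--action on $\RI{s}$ is open. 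The only delicate point is the well--definedness of the restricted action in the first paragraph; once one observes that $A_\sigma$ is a real matrix, everything else is inherited directly from Proposition \ref{quotient}.
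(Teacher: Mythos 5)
Your proposal is correct and takes essentially the same route as the paper: the paper's entire proof of this corollary consists of observing that the $\Sim{s}$--action \eqref{S-action} is well--defined on $\Im{\mathcal{M}(-s)} = \Im{\mathbb{A}_s} \times [\C^*\setminus\{1\}]^{s-3}\setminus\Delta$ (your first paragraph, where realness of the matrices $A_{\sigma}$ is indeed the key point) and then invoking the argument of Proposition \ref{quotient} without change. Your remaining paragraphs (equivariance, surjectivity via normalizing three poles to $0,\infty,1$, and descent to a homeomorphism) simply spell out what the paper leaves implicit by its appeal to that proposition.
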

For $s=4$, 
the number of connected components of $\Im{\mathcal{M}(-4)}$ depends only on the number of connected components of $\Im{\mathbb{A}_4}$. 
In fact, $\Im{\mathcal{M}(-4)}$ has 14 connected components,
$$
	\begin{array}{lcl}
		X_{j}^{+} &:=& \left\{(ir_1,ir_2,ir_3, p_4) \in \Im{\mathcal{M}(-4)}  \ \left| \ r_4 > 0, \ r_{j} > 0, \ r_{\iota} < 0 \ \iota \not= j\right\}\right. ,\\
		X_{j}^{-} &:=& \left\{(ir_1,ir_2,ir_3, p_4) \in \Im{\mathcal{M}(-4)} \ \left| \ r_4 < 0, \ r_{j} > 0, \ r_{\iota} < 0 \ \iota \not= j
\right\}\right. ,\\
		X_{j_1j_2}^{+} &:=& \left\{(ir_1,ir_2,ir_3, p_4) \in \Im{\mathcal{M}(-4)} \ \left| \ r_4 > 0, \ r_{j_1} > 0, r_{j_2} > 0 \ r_{j_3} < 0 \right\}\right. ,\\
		X_{j_1j_2}^{-} &:=& \left\{(ir_1,ir_2,ir_3, p_4) \in \Im{\mathcal{M}(-4)} \ \left| \ r_4 < 0, \ r_{j_1} > 0, r_{j_2} > 0 \ r_{j_3} < 0 \right\}\right. ,\\
		X_{+} &:=& \left\{(ir_1, ir_2,ir_3, p_4) \in \Im{\mathcal{M}(-4)} \ | \ r_j > 0, \ \ j=1,2,3\right\} ,\\
		X_{-} &:=& \left\{(ir_1, ir_2,ir_3, p_4) \in \Im{\mathcal{M}(-4)} \ | \ r_j < 0, \ \ j=1,2,3\right\} .\\
	\end{array}
$$
By applying the $\Sim{4}$--action \eqref{S-action}, 
these components are identified as
$$
\begin{array}{c}
X_+ \sim X^{+}_{23} \sim X^{+}_{13} \sim X^{+}_{12}, \\
X_- \sim X^{-}_{1} \sim X^{-}_{2} \sim X^{-}_{3}, \\
X^{+}_{1} \sim X^{+}_{2} \sim X^{+}_{3} \sim X^{-}_{23} \sim X^{-}_{13} \sim X^{-}_{12}.
\end{array}
$$
Using Proposition \ref{quotient}, Corollary \ref{quotient2} and Table \ref{tab:1}, 
result below was proved.
\begin{lemma}
1. The quotient $\racs{4}/\PSL$ is connected and it admits a stratification with 4 orbit types and 5 stratum.\\
2. The quotient ${\RI{4}}/{\PSL}$ admits a stratification with 4 orbit types, 3 connected components and 10 stratum. The corresponding principal $\PSL$--bundle $\pi_4: \mathcal{RIG}(-4) \longrightarrow \mathcal{RIE}(-4)$ is nontrivial.
\end{lemma}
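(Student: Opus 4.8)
The plan is to reduce both statements to the combinatorial models furnished by Proposition~\ref{quotient} and Corollary~\ref{quotient2}, and then to read off orbit types, connected components and strata from the explicit loci where the cross--ratio and the residues degenerate. Throughout I use that, with the normalization $p_1=0$, $p_2=\infty$, $p_3=1$, a direct computation gives $(p_1,p_2,p_3,p_4)=p_4$, so the cross--ratio conditions of Lemma~\ref{isotropia-residuos-4} become conditions on the single point $p_4\in\C^*\setminus\{1\}$: the harmonic values $p_4\in\{-1,\tfrac12,2\}$ and the equianharmonic values $p_4\in\{(1\pm i\sqrt3)/2\}$.

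For part~1, I would start from the homeomorphism $\racs{4}/\PSL\cong\mathcal{M}(-4)/\Sim{4}$ of Proposition~\ref{quotient}, where $\mathcal{M}(-4)=\mathbb{A}_4\times(\C^*\setminus\{1\})$. Since $\mathbb{A}_4$ (the complement of a hyperplane arrangement in $\C^3$) and $\C^*\setminus\{1\}$ are connected, $\mathcal{M}(-4)$ is connected, hence so is its quotient, giving the connectedness assertion. By Table~\ref{tab:1} the only isotropy groups are $\{Id\}$, $\Z_2$, $\Z_3$ and $\Z_2\times\Z_2$, i.e.\ four orbit types. I would then identify the four orbit--type loci through Lemma~\ref{isotropia-residuos-4}: the $\Z_3$--locus (three equal residues, $p_4$ equianharmonic), the $\Z_2\times\Z_2$--locus (two pairs of equal residues, $p_4$ harmonic), and two disjoint pieces of the $\Z_2$--locus, one from Case~1 (exactly two equal residues, $p_4$ harmonic) and one from the two--pairs/non--harmonic subcase of Case~2. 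The generic stratum is the complement of these positive--codimension complex subvarieties, hence connected; one then checks that, after passing to the $\Sim{4}$--quotient, the $\Z_3$-- and $\Z_2\times\Z_2$--loci are each connected while the $\Z_2$--locus keeps its two components, yielding $1+2+1+1=5$ strata.

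For part~2, I would repeat the analysis over $\Im{\mathcal{M}(-4)}/\Sim{4}$ from Corollary~\ref{quotient2}. Now the residues are purely imaginary, so the residue coordinates range over $\Im{\mathbb{A}_4}$, whose sign chambers are exactly the fourteen components $X_j^{\pm}$, $X_{j_1j_2}^{\pm}$, $X_{\pm}$ listed in the text; the $\Sim{4}$--action \eqref{S-action} identifies them into the three displayed classes, so $\RI{4}/\PSL$ has three connected components. The four orbit types survive, all being realizable with imaginary residues (for instance $\rp{i,i,i,-3i;\dots}$ for $\Z_3$ and $\rp{ir,ir,-ir,-ir;\dots}$ for $\Z_2\times\Z_2$). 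To count strata I would overlay, inside each of the three components, the real walls $r_i=r_j$ together with the special values of $p_4$, noting that a residue coincidence produces nontrivial isotropy only when combined with the appropriate cross--ratio value; counting the connected components of each orbit--type set then produces the stated total of $10$ strata.

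The genuinely hard part is the nontriviality of the principal $\PSL$--bundle $\pi_4:\mathcal{RIG}(-4)\to\mathcal{RIE}(-4)$. Since $\PSL$ deformation retracts onto $PSU(2)\cong SO(3)$, it is connected with $\pi_1(\PSL)\cong\Z_2$; thus a section always exists over the $1$--skeleton, and from the homotopy exact sequence of the fibration a section can exist only if the connecting homomorphism $\partial:\pi_2(\mathcal{RIE}(-4))\to\pi_1(\PSL)\cong\Z_2$ vanishes (dually, the primary obstruction lies in $H^2(\mathcal{RIE}(-4);\Z_2)$). In contrast to the case $s=3$, where the base components are contractible and the bundle is forced to be trivial (Example~\ref{cociente-3}), here I would exhibit an explicit $2$--sphere $\Sigma$ inside a non--contractible component of $\mathcal{RIE}(-4)$ over which the bundle restricts to the nontrivial $\PSL$--bundle on $S^2$, whose clutching function $S^1\to\PSL$ is the full rotation $z\mapsto e^{i\theta}z$, $\theta\in[0,2\pi]$, representing the generator of $\pi_1(\PSL)$. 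The crux — and the step most likely to require care — is to produce such a family \emph{within the generic isochronous locus} and to verify that its monodromy rotation is genuinely the nonzero class of $\pi_1(\PSL)$ rather than a nullhomotopic loop; once $\partial$ is shown to be nonzero on $[\Sigma]$, no section exists and the bundle is nontrivial.
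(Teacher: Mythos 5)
Your treatment of the counting claims follows the paper's own route: the paper likewise combines Proposition~\ref{quotient}, Corollary~\ref{quotient2} and Table~\ref{tab:1} with the decomposition of $\Im{\mathcal{M}(-4)}$ into the fourteen chambers $X_j^{\pm}$, $X_{j_1j_2}^{\pm}$, $X_{\pm}$ and their identification into three classes under the $\Sim{4}$--action \eqref{S-action}. Your refinement via Lemma~\ref{isotropia-residuos-4} --- using the correct observation that with poles normalized to $(0,\infty,1,p_4)$ the cross--ratio is just $p_4$, so the $\Z_2$--locus splits into the ``exactly two equal residues, harmonic $p_4$'' piece and the ``two pairs, non--harmonic $p_4$'' piece, giving $1+2+1+1=5$ strata --- is a sound filling--in of detail the paper leaves implicit. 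One caveat on part~2: you never actually perform the count of $10$; the sentence ``counting the connected components of each orbit--type set then produces the stated total'' presupposes the answer. To close it you must intersect each isotropy locus (imaginary residues together with the special values of $p_4$) with each of the three components and enumerate the resulting pieces.

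The genuine gap is the nontriviality of $\pi_4:\mathcal{RIG}(-4)\to\mathcal{RIE}(-4)$. Your framework is correct ($\PSL\simeq SO(3)$ is connected with $\pi_1\cong\Z_2$, so triviality is governed by the connecting map $\partial:\pi_2(\mathcal{RIE}(-4))\to\Z_2$, equivalently by a primary obstruction in $H^2(\mathcal{RIE}(-4);\Z_2)$), but the argument is conditional on exhibiting a $2$--sphere $\Sigma$ in the generic isochronous base with nontrivial clutching, and you never exhibit one; you do not even establish that $\pi_2$, or $H^2(\,\cdot\,;\Z_2)$, of the relevant component is nonzero, so a priori the obstruction could vanish and the bundle could be trivial. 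As you yourself say, this is ``the crux'', and flagging the crux is not crossing it. (To be fair, the paper offers no argument either: its proof is the single sentence that the lemma follows from Proposition~\ref{quotient}, Corollary~\ref{quotient2} and Table~\ref{tab:1}, which says nothing about the bundle; so there is no published argument you failed to reproduce, but the claim remains unproven in your write--up.) A concrete way to finish within your setup: the unique renormalization of three poles to $0,\infty,1$ identifies $\mathcal{RIG}(-4)$ with the quotient of $\bigl(\text{generic locus of } \Im{\mathcal{M}(-4)}\bigr)\times\PSL$ by the free action $\sigma\cdot(x,T)=\bigl(\sigma\cdot x,\,M_\sigma(x)\,T\bigr)$, where $M_\sigma(x)\in\PSL$ is the M\"obius map renormalizing the permuted poles; nontriviality then becomes the checkable question of whether this $\PSL$--valued cocycle can be trivialized, for instance by testing whether $\pi_1(\mathcal{RIG}(-4))$ splits as $\Z_2\times\pi_1(\mathcal{RIE}(-4))$, which triviality of the bundle would force.
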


\begin{remark}
1. The complex manifold $\racs{s}$ and $\racs{s}/\PSL$ are connected.\\
2. The real analityc manifold $\RI{s}$ and the quotient $\RI{s}/\PSL$ have $s-1$ connected components.
\end{remark}
For odd numbers $3 \leq s \leq 11$, the Table \ref{tab:2} shows the number of orbit types and stratum on the quotient $\RI{s}/\PSL$.
\begin{table}[h]
\caption{}
\label{tab:2}       
\begin{tabular}{lll}
\hline\noalign{\smallskip}
 s & \text{Orbit types} & \text{Stratum} \\
\noalign{\smallskip}\hline\noalign{\smallskip}
3 & 2 & 4 \\ 
5 & 5 & 16 \\ 
7 & 6 & 24 \\ 
9 & 8 &  32 \\ 
11 & 8 & 48 \\ 
\noalign{\smallskip}\hline
\end{tabular}
\end{table}

\section{The associated singular flat surfaces $S_{\omega}$}
\label{sec-surfaces}

\subsection{Isometries}

Recall that for $\omega \in \racs{s}$, 
there is a complex atlas $\{(V_j, \Psi_j )\}$ on $X_{\omega} = \CC \setminus \{\text{zeros and poles of } \omega \}$, 
where $\{ V_j \}$ is an open cover by simply connected sets and the functions 
$$
\Psi_{j}(z) = \int_{z_0}^{z}{\omega}: V_j \longrightarrow \C
$$
are well--defined for all $j$. 
Moreover, 
$\Psi_{jk}(z) = z + a_{jk}$, for $a_{jk} \in \C$. 
If $\omega \in \Omega^1\{k_1, \ldots, k_m; -1, \ldots, -1\} \subset \racs{s}$, 
then the zero of multiplicity $k_j$ is a singularity of cone angle $(2k_j+2)\pi$ and the pole $p_{\iota}$ is a cylindrical end of diameter $T_{\iota} = 2\pi |r_{\iota}|$, 
where $r_{\iota} = Res(\omega, p_{\iota})$, 
$j = 1, \ldots, m$ and $\iota =1, \ldots , s$; 
see \cite{jesus1, valero}. 

For $\omega=({Q(z)}/{P(z)})dz \in \racs{s}$, 
its associated singular flat surface $S_{\omega} = (\CC, g_{\omega})$ 
has the riemannian metric
$$
g_{\omega}(z) := \left(\begin{smallmatrix}
               \left| \frac{Q(z)}{P(z)} \right|^2 & 0 \\
                0 & \left| \frac{Q(z)}{P(z)} \right|^2
             \end{smallmatrix}\right).
$$
We denote by $S^1$ the unit circle on $\C$. 
The result below is well--known; 
see \cite{valero}. 

\begin{proposition}
\label{isometric-surfaces}
 For $\omega, \eta \in \racs{s}$, their associated singular flat surfaces $S_{\omega}$ and $S_{\eta}$ are isometric if and only if there exist $\lambda \in S^1$ and $T \in \PSL$ such that $\eta = \lambda T_*\omega$.
\end{proposition}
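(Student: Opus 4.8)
The plan is to read the flat metric directly off the $1$--form. Writing $\omega = f_\omega(z)\,dz$ with $f_\omega = Q/P$, the displayed matrix says precisely that $g_\omega = |f_\omega(z)|^2\,|dz|^2$, so $g_\omega$ depends only on the modulus $|\omega|$ and is conformal to the standard metric of $\CC$ away from the finite set of zeros and poles. For the implication $(\Leftarrow)$, suppose $\eta = \lambda\,T_*\omega$ with $\lambda \in S^1$ and $T \in \PSL$. Since $|\lambda| = 1$, replacing $\omega$ by $\lambda\omega$ does not change the metric at all, and because $T$ is a biholomorphism of $\CC$ one has $T^*(T_*\omega) = \omega$; hence $T^* g_\eta = |T^*\eta|^2|dz|^2 = |\lambda\,T^*T_*\omega|^2|dz|^2 = |\omega|^2|dz|^2 = g_\omega$. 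Thus $T$ is an isometry $S_\omega \to S_\eta$, carrying zeros and poles of $\omega$ to those of $\eta$ with equal multiplicities, hence cone points to cone points of equal angle and cylindrical ends to cylindrical ends; so $S_\omega$ and $S_\eta$ are isometric.

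For $(\Rightarrow)$, let $\Phi : S_\omega \to S_\eta$ be an isometry. First I would upgrade $\Phi$ to a Möbius map. Both surfaces are oriented by the complex structure of $\CC$, and an orientation--preserving isometry between oriented Riemannian surfaces is conformal for the underlying complex structures: in the developing coordinates $\Psi_j(z) = \int_{z_0}^z \omega$ the isometry $\Phi$ is realized locally as an orientation--preserving Euclidean motion $w \mapsto aw + b$ with $|a| = 1$, which is holomorphic. On $X_\omega$ this makes $\Phi$ a biholomorphism onto $X_\eta$. Since an isometry must send a cone point of angle $(2k_j+2)\pi$ to one of the same angle and a cylindrical end of diameter $2\pi|r_\iota|$ to a cylindrical end, $\Phi$ extends continuously across the finitely many singular points, and Riemann's removable--singularity theorem promotes it to a biholomorphism of the whole sphere $\CC$. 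Therefore $\Phi = T$ for a unique $T \in \PSL$.

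It then remains to extract $\lambda$. Because $\Phi = T$ is an isometry, $T^* g_\eta = g_\omega$, i.e. $|T^*\eta|^2 = |\omega|^2$ pointwise, so the meromorphic function $h := T^*\eta / \omega$ on $\CC$ has constant modulus $|h| \equiv 1$ on the complement of the zeros and poles. A nonconstant meromorphic function is an open map, and its image therefore cannot be contained in the unit circle; hence $h$ is a constant $\lambda \in S^1$, so $T^*\eta = \lambda\,\omega$, that is $\eta = \lambda\,T_*\omega$, as claimed.

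The main obstacle is the conformal--extension step: turning the a priori merely metric isometry of the punctured surface $X_\omega$ into a genuine element of $\PSL$. This needs the fact that isometries respect the local model at each singularity, so that the singular set is mapped to the singular set and the conformal map extends across it. A secondary point is the orientation convention: we consider isometries of the oriented surfaces, for an orientation--reversing isometry would instead yield $\eta = \lambda\,T_*\widetilde{\omega}$ with $\widetilde{\omega} = \overline{f_\omega(\bar z)}\,dz$ the conjugate $1$--form, which in general is not $\PSL$--equivalent to $\omega$.
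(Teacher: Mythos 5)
Your proof is correct, but note that the paper does not actually prove Proposition \ref{isometric-surfaces}: it is stated as well known with a citation to \cite{valero}, so there is no internal argument to compare against. Your self-contained proof follows the standard route and each step is sound: the backward implication is the one-line pullback computation $T^*g_\eta = |T^*\eta|^2|dz|^2 = |\lambda\omega|^2|dz|^2 = g_\omega$; for the forward implication, (i) in the developing charts $\Psi_j$ an orientation-preserving isometry of flat surfaces is locally a Euclidean motion $w \mapsto aw+b$, $|a|=1$, hence holomorphic on $X_\omega$, (ii) cone angles $(2k_j+2)\pi$ and cylinder circumferences $2\pi|r_\iota|$ are metric invariants, so singular points go to singular points and the map extends by removable singularities to some $T \in \PSL$, and (iii) $h = T^*\eta/\omega$ is meromorphic on $\CC$ with $|h|\equiv 1$, hence constant by the open mapping theorem, giving $\lambda \in S^1$. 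One remark: your closing caveat about orientation is not a side issue but is actually necessary for the statement to be true as an ``if and only if.'' If orientation-reversing isometries were allowed, the conjugation $z \mapsto \bar z$ gives an isometry between $S_{\widetilde{\omega}}$ and $S_\omega$, where $\widetilde{\omega}$ has residue multiset $\{\bar r_\iota\}$, while every form $\lambda T_*\omega$ has residue multiset $\{\lambda r_\iota\}$; for generic residues no $\lambda \in S^1$ matches these multisets, so $\widetilde{\omega}$ is not of the form $\lambda T_*\omega$ and the proposition would fail. Thus the proposition (and its later use for the $(S^1 \times \PSL)$-action in Section \ref{sec-surfaces}) must be read with ``isometric'' meaning orientation-preserving isometric, exactly as you point out.
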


\subsection{The $(S^1 \times \PSL)$--action}

By applying Proposition \ref{isometric-surfaces}, 
we can extend naturally the $\PSL$--action \eqref{PSL-action} as follows.
\begin{equation}\label{PSL-action2}
\begin{array}{rcl}
\widehat{\mathcal{A}}_s: (S^1 \times \PSL) \times \racs{s} & \longrightarrow & \racs{s} \\
((\lambda, T), \omega) & \longmapsto & \lambda T_*\omega.
\end{array}
\end{equation}
Similarly, 
we can extend the $\Sim{s}$--action \eqref{S-action} as
\begin{equation}\label{S-action2}
\begin{array}{rcl}
(S^1 \times \Sim{s}) \times \mathcal{M}(-s) & \longrightarrow & \mathcal{M}(-s) \\
((\lambda, \sigma), (r_1, \ldots, r_{s-1}, p_4, \ldots, p_s)) & \longmapsto & \left(\lambda{A_{\sigma}}\left(\begin{smallmatrix}
                   r_1 \\
                    \vdots \\
                   r_{s-1}
                  \end{smallmatrix}\right),\  f_{\sigma}(p_4, \ldots, p_s)\right).
\end{array}
\end{equation}
The expression for the $(S^1 \times \PSL)$--action, 
using the complex atlas by residues--poles on $\racs{s}$, 
is 
$$
\widehat{\mathcal{A}}_s(\lambda, T, \rp{r_1, \ldots, r_s; p_1, \ldots, p_s }) = \rp{\lambda r_1,\ldots, \lambda r_s; T(p_1), \ldots, T(p_s)}.
$$
We use the techniques developed in Section \ref{Sec:action} and \ref{S-examplesquotient} to prove the results below. 

\begin{remark} 
Since the $(S^1 \times \PSL)$--action is proper, 
the quotient
$$
\frac{\racs{s}}{S^1 \times \PSL} = \frac{\{S_{\omega} \ | \ \omega \in \racs{s} \}}{\{\text{Isometries}\}} := \mathfrak{M}(-s)
$$
admits a stratification by orbit types. 
Furthermore, 
the realization for the quotient $\mathfrak{M}(-s)$ is $\mathcal{M}(-s)/ S^1 \times \Sim{s}$.
\end{remark}
For the subgroups $\Z_2 \times \PSL < S^1 \times \PSL$ and $\Z_2 \times \Sim{s} < S^1 \times \Sim{s}$, 
the actions \eqref{PSL-action2} and \eqref{S-action2} are well--defined on $\RI{s}$ and $\Im{\mathcal{M}(-s)}$, 
respectively. 
\begin{remark} 
The $(\Z_2 \times \PSL)$--action on $\RI{s}$ is proper, 
therefore the quotient
$$
\frac{\RI{s}}{\Z_2 \times \PSL} = \frac{\{S_{\omega} \ | \ \omega \in \RI{s} \}}{\{\text{Isometries}\}} := \mathcal{RI}\mathfrak{M}(-s)
$$
admits a stratification by orbit types. 
Furthermore, 
the realization for the quotient $\mathcal{RI}\mathfrak{M}(-s)$ is homeomorphic to $\Im{\mathcal{M}(-s)}/ \Z_2 \times \Sim{s}$.
\end{remark}

\begin{example}
\begin{upshape}
The quotient $\mathfrak{M}(-3)$ and $\mathcal{RI}\mathfrak{M}(-3)$ are connected and their admit a stratification with two orbit types. 
For $\mathcal{RI}\mathfrak{M}(-3)$, 
a fundamental domain is 
$$
\{(r_1, r_2) \ | \ 0 < r_1 \leq r_2 \},
$$
and the orbit types are $\{r_1 = r_2 \}$ and $\{ r_1 < r_2 \}$.
\end{upshape}
\end{example}










\subsection*{\small{Acknowledgements}}
\small{The author would like to thank his advisor Jes\'us Muci\~no--Raymundo for all fruitful discussions with him during the preparation of this paper and the PhD thesis. 
This work was supported by a PhD scholarship provided by CONACyT at the Centro de Ciencias Matem\'aticas, UNAM and Instituto de F\'isica y Matem\'aticas de la Universidad Michoacana de San Nicol\'as de Hidalgo. }









\bibliographystyle{plain} 
\bibliography{referencias}
\end{document}